\documentclass[oneside,english]{amsart}
\usepackage[T1]{fontenc}
\usepackage[latin9]{inputenc}
\usepackage{mathrsfs}
\usepackage{amsthm}
\usepackage[all]{xy}

\makeatletter
\numberwithin{equation}{section}
\numberwithin{figure}{section}
\theoremstyle{plain}
\newtheorem{thm}{\protect\theoremname}[section]
  \theoremstyle{definition}
  \newtheorem{defn}[thm]{\protect\definitionname}
  \theoremstyle{remark}
  \newtheorem{rem}[thm]{\protect\remarkname}
  \theoremstyle{definition}
  \newtheorem{example}[thm]{\protect\examplename}
  \theoremstyle{plain}
  \newtheorem{prop}[thm]{\protect\propositionname}
  \theoremstyle{plain}
  \newtheorem{cor}[thm]{\protect\corollaryname}
  \theoremstyle{plain}
  \newtheorem{lem}[thm]{\protect\lemmaname}

\makeatother

\usepackage{babel}
  \providecommand{\corollaryname}{Corollary}
  \providecommand{\definitionname}{Definition}
  \providecommand{\examplename}{Example}
  \providecommand{\lemmaname}{Lemma}
  \providecommand{\propositionname}{Proposition}
  \providecommand{\remarkname}{Remark}
\providecommand{\theoremname}{Theorem}

\begin{document}

\title{Broad posets, trees, and the dendroidal category}

\author{Ittay Weiss}
\begin{abstract}
An extension of order theory is presented that serves as a formalism
for the study of dendroidal sets analogously to way the formalism
of order theory is used in the study of simplicial sets. 
\end{abstract}
\maketitle

\section{Introduction}

In algebraic topology a space $X$ is often replaced by its singular
complex $S(X)$ which is defined as follows. For every $n\ge0$ let
$\Delta^{n}$ be the standard $n$-dimensional simplex. The singular
complex $S(X)$ has, at each dimension $n\ge0$, the set $S(X)_{n}=\{f:\Delta^{n}\to X\mid f\mbox{ is continuous}\}$.
A famous result due to Quillen shows that for homotopy theoretic purposes
it makes little difference whether one works directly with the space
$X$ or its singular complex $S(X)$. The advantage of working with
$S(X)$ is that it is a completely combinatorial object belonging
to the theory of simplicial sets. The combinatorics within a simplicial
set is governed by the polytopal interrelations between the standard
$n$-simplices $\{\Delta^{n}\mid n\ge0\}$. As is well known, these
interrelations are equivalent to those between all finite, non-empty,
linearly ordered sets. The latter observation brings to the theory
of simplicial sets, and thus to topology, the very rigorous and algebraic
formalism of order theory. 

Recently (\cite{MR2366165}), the concept of simplicial set was generalized
to that of dendroidal set. The context of the generalization is operadic
rather than topological but the general aim is the same: to provide
for combinatorial models of operads and $\infty$-operads. The combinatorics
within a dendroidal set is governed by the interrelations between
finite trees. The theory of dendroidal sets is then seen to extend
that of simplicial sets by viewing every finite linear order as a
linear tree. 

The aim of this work is to present an extension of order theory that
stands in the same relation to dendroidal sets as order theory does
to simplicial sets. We exemplify how the extension we present can
be used to argue about trees, and thus dendroidal sets, in a way that
is analogous to the use of posets in arguments about simplicial sets.
We thus provide a rigorous and algebraic formalism playing the same
role in the theory of dendroidal sets as order theory does in the
theory of simplicial sets. 

Though we develop just that part of the theory of the extension of
order theory that is needed for its applicability to dendroidal sets,
we note that the theory appears to be interesting in its own right.

\subsection*{Plan of the paper}

Section \ref{sec:Trees-and-operads} contains an intuitive description
of trees and operads stressing the aspects that are relevant to the
formalism presented. Section \ref{sec:Broad-posets} then contains
the extension of order theory which is the ambient category where
the main objects of study, dendroidally ordered sets, presented in
Section \ref{sec:Dendroidally-ordered-sets}, reside. Section \ref{sec:The-correspondence-between}
then classifies dendroidally ordered sets in terms of trees, with
Section \ref{sec:Fundamental-structure-of} closing the article with
a proof, completely within the formalism developed below, of a fundamental
decomposition result in the dendroidal category.

\section{\label{sec:Trees-and-operads}Trees and operads}

Trees and operads are presented in a rather intuitive fashion meant
to immediately make available the key ideas relevant to the following
sections. For more detailed accounts of operads given in the spirit
of dendroidal sets the reader is referred to \cite{MR2797154} or
\cite{Weiss:2010fk}.

\subsection{Trees}

By a tree we mean a rooted, either planar or non-planar, tree with
leaves and stumps. Various definitions of tree exist in the literature
and perhaps the two most common approaches are to define a tree as
a special graph or as a special topological space. However, even within
a single framework there are many possibilities for a formalization
of the tree concept. For instance, it is common to define a tree as
consisting of a set $E$ of edges and a set $V$ of vertices together
with certain incidence relations and some conditions. But, it is also
possible (see \cite{Weiss:2010fk}) to dispose of the set $V$ and
capture vertices as a by-product of a certain structure only on the
set of edges. Expectedly, different formalisms have virtues in different
situations. 

The picture 
\[
\xymatrix{*{\,}\ar@{-}[dr]_{e} &  & *{\,}\ar@{-}[dl]^{f}\\
\,\ar@{}[r]|{\,\,\,\,\,\,\,\,\,\,\,\,\,\, v} & *{\bullet}\ar@{-}[dr]_{b} &  & *{\,}\ar@{-}[dl]_{c}\ar@{}[r]|{\,\,\,\,\,\,\,\,\,\,\,\, w} & *{\bullet}\ar@{-}[dll]^{d}\\
 &  & *{\bullet}\ar@{-}[d]_{r} & \,\ar@{}[l]^{u\,\,\,\,\,\,\,\,\,\,\,}\\
 &  & *{\,}
}
\]
of a tree exemplifies all of the features of interest to us. It consists
of six \emph{edges} of which the one labelled $r$ is the \emph{root}.
It has three \emph{vertices} marked by $\bullet$ and each vertex
has a set of \emph{incoming} edges and single \emph{outgoing} edge
leading to the root. One of these vertices, labelled $w$, contains
no incoming edges and is called a \emph{stump}. The edges labelled
$c,e,f$ have no vertex at their top and are called \emph{leaves}.
The picture can be taken as that of either a \emph{planar} tree or
a \emph{non-planar} one, with the crucial difference being whether
or not the order of the incoming edges at each vertex is important
or not, namely, in the planar case it is important and in the non-planar
one it is not. Certain parts of the tree lie at its outermost layer,
such as the pair of edges $\{e,f\}$ as well as the stump $w$. Such
regions of the tree are called \emph{external clusters}. Intuitively,
these are parts of the tree that can be trimmed by removing a single
vertex completely. Edges that belong to the inner layers of the tree
are called \emph{inner edges}. More precisely, these are edges with
a vertex at each end, such as edges $b$ and $d$. Intuitively, an
inner edge is an edge that can be contracted to merge together two
vertices. Other aspects visible in the tree above are that every edge
that is not a leaf has \emph{children} and that every edge other than
the root is a child of a unique \emph{parent} edge. Every two edges
admit a \emph{join}, an edge which is the first \emph{common ancestor}
of the given edges. For instance, the join of $e$ and $f$ is $b$
while that of $e$ and $c$ is $r$. Lastly, an intuitive feature
of trees is that two trees can be \emph{grafted} by placing one on
top of the other and identifying the root in one with a leaf in the
other.

\subsection{Operads}

By an operad we mean either a symmetric or planar coloured operad,
also known as a multicategory. Intuitively, it consists of a class
of labelled trees (planar ones for planar operads and non-planar ones
for symmetric operads) such that the edges in a tree are labelled
by objects and the vertices are labelled by multivariable arrows.
The various labelled trees must satisfy a consistency condition that
basically says that each multivariable arrow has a unique input and
output. Moreover, the class of labelled trees must be saturated, meaning
that every finite combinations of multivariable arrows with matching
inputs and outputs occur as a labelled tree. On top of that structure
there is then a composition operation that turns one such labelled
tree into a labelled tree with just one vertex and having the same
number of leaves as the original tree. For the composition there is
an associativity condition that says that starting with a single labelled
tree, composing it in one go or composing any subtrees of it first
will result in the same composition (and there are also identity constraints
which we neglect in this intuitive explanation). As such, any tree
naturally gives rise to an operad by generating a free one. The objects
are then the edges of the tree and the arrows are freely generated
by the vertices in the tree. Stumps are then interpreted as constants. 

We mention a few trees that play an important role in the theory.
A tree $L_{n}$ of the form
\[
\xymatrix{*{}\ar@{-}[d]\\
*{\bullet}\ar@{..}[d]\\
*{\bullet}\ar@{-}[d]\\
*{\bullet}\ar@{-}[d]\\
*{}
}
\]
with one leaf and only unary vertices is a \emph{linear tree}. The
linear trees stand to all trees in the same way that ordinary functions
stand to multivariable functions. The special case of the tree $L_{0}$

\[
\xymatrix{*{}\ar@{-}[d]\\
*{}
}
\]
consisting of just one edge and no vertices is denoted by $\eta$
and is the only tree, up to isomorphism, whose root is also a leaf. 

A tree $C_{n}$ of the form 
\[
\xymatrix{*{}\ar@{-}[dr] & *{}\ar@{}[d]|{\cdots} & *{}\ar@{-}[dl]\\
 & *{\bullet}\ar@{-}[d]\\
 & *{}
}
\]
that has just one vertex and $n$ leaves is called an \emph{$n$-coroll}a.
The corollas can be seen as the building blocks of all trees as any
tree is the grafting of corollas. A related remark is that any tree
$T$ admits an essentially unique decomposition as the grafting $C_{n}\circ(T_{1},\cdots,T_{n})$
where each $T_{i}$ is the subtree of $T$ having as its root the
$i$-th incoming edge to the root of $T$. 

In the study of operads, and particularly $\infty$-operads, the dendroidal
category plays a prominent role (see \cite{MR2366165,MR2508925,MR2797154,Weiss:2010fk,MR2805991,Cisinski:2010fk,Cisinski:2011uq}).
It comes in two flavours depending on whether one studies planar or
symmetric operads. The planar dendroidal category $\Omega_{\pi}$
has as objects all planar trees and as arrows all maps of operads
between the operads generated by the trees. Similarly, the non-planar
dendroidal category $\Omega$ has non-planar trees as objects and
symmetric operad maps as arrows. The interrelations between the trees
in (each variant of) the dendroidal category is what we call the operadic
tree combinatorics. 

To illustrate what we achieve below, consider the following. A common
definition of tree that uses the language of order theory is as a
poset $P$ that satisfies that for every $x\in P$ the down set $x_{\downarrow}=\{y\in P\mid y\le x\}$
is well-ordered. This definition does not capture the operadic tree
combinatorics. For instance, from the operadic point of view, an $n$-corolla
$C_{n}$ has precisely $n+1$ subtrees, all having just a single edge
(which represent the inputs and output of a multivariable arrow).
However, using the definition just mentioned, $C_{n}$ would also
have $n$ linear subtrees with two edges (which do not correspond
to anything one can obtain from a multivariable arrow).

Another formalism of trees which exhibits the same kind of difficulty
is that given in \cite{MR1301191} where a tree is defined as a topological
space. Recently, a formalism of trees in terms of polynomial functors
was given in \cite{MR2764874} which does capture the operadic tree
combinatorics. 

The aim of this work, which expands on ideas introduced by the author
briefly in \cite{Weiss:2010fk}, is to develop an order theoretic
formalism in which all of the intuitive properties of trees above
follow from just three axioms and such that the order preserving functions
capture the combinatorics of trees relevant to operads. Such a formalism
allows for very precise arguments about dendroidal sets that do not
rely on sometimes vague intuition about trees and can be useful in
other places where tree formalisms are needed.

\section{\label{sec:Broad-posets}Broad posets}

This section presents the notion of broad poset, exhibits ordinary
posets as a slice of broad posets, relates the latter to operads,
and establishes a closed symmetric monoidal structure by means of
a suitable tensor product of broad posets.

\subsection{Definition of broad posets and their relation to operads}

For a set $A$ we denote by $A^{\cdot}$ the free monoid on $A$ with
unit $\epsilon$. The free commutative monoid $A^{+}$ is obtained
from $A^{\cdot}$ by an obvious abelianization process. We use the
same notation $a\cdot b$ to indicate both the operation in $A^{\cdot}$
as well as in $A^{+}$. We identify $A$ in either $A^{\cdot}$ or
$A^{+}$ in the obvious way. 
\begin{defn}
A \emph{commutative broad relation} is a pair $(A,R)$ where $A$
is a set and $R$ is a subset of $A^{+}\times A$. A \emph{non-commutative
broad relation }is a pair $(A,R)$ where $A$ is a set and $R$ is
a subset of $A^{\cdot}\times A$. 
\end{defn}
All of the definitions and results below come in a commutative as
well as a non-commutative flavour, with the formulation essentially
unchanged. Thus, we use the notation $A^{*}$ to mean that it can
be replaced, throughout an entire definition or result, by either
$A^{\cdot}$ or $A^{+}$. The same convention holds for the use of
the term 'broad relation'. It can be replaced throughout a definition
or result by either 'non-commutative broad relation' or 'commutative
broad relation'. The following is an instance of this convention.
As is common with ordinary relations, for $a_{1}\in A^{*}$ and $a_{2}\in A$,
we write $a_{1}Ra_{2}$ to mean $(a_{1},a_{2})\in R$, for any broad
relation $R$. We also write $a_{2}\in a_{1}$ to indicate that $a_{2}$
occurs in $a_{1}$ (as a factor in the non-commutative case and as
a summand in the commutative case). 
\begin{defn}
A \emph{broad poset} is a broad relation $(A,R)$ such that, for all
$n\ge0$, $a_{1},\cdots,a_{n},a,a'\in A$ and $b_{1},\cdots,b_{n}\in A^{*}$,
the following conditions hold.\end{defn}
\begin{itemize}
\item Reflexivity: $aRa$. 
\item Transitivity: If $a_{1}\cdot\cdots\cdot a_{n}Ra$ and, for all $1\le i\le n$,
$b_{i}Ra_{i}$ hold then $b_{1}\cdot\cdots\cdot b_{n}Ra$ holds.
\item Anti-symmetry: If $aRa'$ and $a'Ra$ both hold then $a=a'$. \end{itemize}
\begin{rem}
Following our convention, this definition is actually two definitions.
When $A^{*}$ is the free monoid on $A$ then the notion defined is
called a \emph{non-commutative broad poset. }When $A^{*}$ is the
free commutative monoid on $A$ then the notion defined is called
a \emph{commutative broad poset. }Once more, the term 'broad poset'
can be replaced throughout by either 'commutative broad poset' or
'non-commutative broad poset'. 
\end{rem}
When $(A,R)$ is a broad poset we denote $R$ by $\le$ and then the
meaning of $<$ is defined in the obvious way\emph{. }Obviously, one
has the standard constructions, for a broad relation $R$, of the
\emph{reflexive closure} $R^{r}=R\cup\{(a,a)\mid a\in A\}$ and the
\emph{transitive closure} $R^{t}=\bigcap_{R\subseteq_{t}S}S$ (where
the notation $R\subseteq_{t}S$ indicates that $S$ is a transitive
broad relation containing $R$). Lastly, if $R$ satisfies reflexivity
and transitivity then setting $a\sim b$, for $a,b\in A$, when both
$aRb$ and $bRa$ hold, defines an equivalence relation and $R_{0}=R/\sim$
inherits the broad relation structure from $R$. It follows easily
that if $R$ is any broad relation then $(R^{rt})_{0}$ is a broad
poset, called the broad poset \emph{generated }by $R$.
\begin{defn}
A function $f:A\rightarrow A'$ between broad posets is \emph{monotone}
if, for every $b\in A^{*}$ and $a\in A$, the inequality $b\le a$
implies $f(b)\le f(a)$ (where $f(b)=f(b_{1}\cdot\cdots\cdot b_{n})=f(b_{1})\cdot\cdots\cdot f(b_{n})$). 
\end{defn}
We thus obtain the categories $\mathbf{bPos_{c}}$ and $\mathbf{bPos_{\pi}}$
of, respectively, commutative and non-commutative broad posets and
monotone functions. In accordance with our convention, the term $\mathbf{bPos}$
below is meant to be replaced throughout by either $\mathbf{bPos}_{c}$
or $\mathbf{bPos_{\pi}}$. 
\begin{rem}
\label{Rem:broad preord as enriched operads}Recall that preordered
sets and monotone functions are equivalent to categories enriched
in the truth values monoidal category $V=\{F<T\}$. Similarly, commutative
broad preorders, i.e., commutative broad relations satisfying reflexivity
and transitivity, are essentially the same as symmetric operads enriched
in $V$. In the same vain, non-commutative broad preorders, i.e.,
non-commutative broad relations satisfying reflexivity and transitivity,
are essentially the same as planar operads enriched in $V$. \end{rem}
\begin{example}
\label{Example: singleton broad posets}There is a whole range of
possible broad poset structures on a singleton set, of which we mention
two. A terminal object, $*$, in $\mathbf{bPos}$ is a singleton set
$S=\{*\}$ in which, if we write $n\cdot*=*\cdot\cdots\cdot*$ for
the $n$-fold product/sum, the inequality $n\cdot*\le*$ holds for
every $n\ge0$. At the other extreme we find the broad poset $\star$
in which $n\cdot*\le*$ holds if, and only if, $n=1$. Notice that
there is, for every broad poset $A$, a bijection between the set
of monotone functions $\star\to A$ and the elements of $A$. 
\end{example}
~
\begin{example}
If $P$ is a meet semi-lattice then one can define a broad poset structure
on $P$ as follows. Given $p_{0},\cdots,p_{n}\in P$, the inequality
$p_{1},\cdots,p_{n}\le p_{0}$ holds precisely when $p_{1}\wedge\cdots\wedge p_{n}\le p_{0}$.
More generally, if $(P,\cdot,I)$ is a symmetric monoid object in
$\mathbf{Pos}$ then one similarly obtains a commutative broad poset
structure on $P$. We mention that broad posets arising from symmetric
monoid objects in $\mathbf{Pos}$ can be characterized by certain
representability conditions in a way similar to the representability
of symmetric multicategories given in \cite{MR1758246}. Similar remarks
are valid in the non-commutative case. 
\end{example}
~
\begin{example}
\label{Exam:n-corolla as broad poset}For every $n\ge0$ let $\gamma_{n}$
be a set $\{r,l_{1},\cdots,l_{n}\}$ with $n+1$ elements with the
broad poset structure in which the only inequality, other than those
imposed by reflexivity, is $l_{1}\cdot\cdots\cdot l_{n}\le r$. Note
that, for every broad poset $A$, a monotone function $\gamma_{n}\to A$
corresponds bijectively to a choice of $n+1$ elements $a_{0},\cdots,a_{n}\in A$
satisfying $a_{1}\cdot\cdots\cdot a_{n}\le a_{0}$. The broad poset
$\gamma_{n}$ is called an $n$-\emph{corolla}. \end{example}
\begin{thm}
\label{thm:comp cocomp}The category $\mathbf{bPos}$ is small complete
and small cocomplete. \end{thm}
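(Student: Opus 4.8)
The plan is to prove completeness and cocompleteness separately by exhibiting the relevant universal constructions explicitly. For completeness it suffices, by the standard criterion, to construct all small products and all equalizers. For cocompleteness, by duality of the criterion, it suffices to construct all small coproducts and all coequalizers. The category is concrete over $\mathbf{Set}$ via the forgetful functor sending $(A,R)$ to $A$, and the broad poset axioms are all of the ``universally quantified Horn clause'' shape, so I expect limits to be computed on underlying sets with the finest compatible broad relation, and colimits to be computed on underlying sets (or quotients thereof) with a relation generated by the images of the pieces.

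First I would treat products. Given a family $\{(A_i, R_i)\}_{i \in I}$, I would take the underlying set to be the product $\prod_i A_i$ and declare $b \le a$ (for $b \in (\prod_i A_i)^{*}$ and $a \in \prod_i A_i$) precisely when, for every coordinate $i$, the projected inequality $\pi_i(b) \le_i \pi_i(a)$ holds in $A_i$; here $\pi_i$ is applied factorwise to the word/multiset $b$. I would then verify that reflexivity, transitivity, and anti-symmetry each hold coordinatewise and hence hold in the product, and that the projections are monotone and satisfy the universal property (a cone is exactly a coordinatewise-compatible family, which is what monotonicity in each coordinate encodes). Equalizers are even simpler: for a parallel pair $f, g : A \to A'$ I would take the subset $E = \{a \in A \mid f(a) = g(a)\}$ with the broad relation inherited by restriction, i.e.\ $b \le a$ in $E$ iff it holds in $A$ for $b \in E^{*}$, $a \in E$; the axioms restrict without issue and the universal property is routine. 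Together these give small completeness.

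For coproducts I would take the disjoint union $\coprod_i A_i$ of underlying sets, and declare $b \le a$ to hold only when all the factors of $b$ together with $a$ lie in a single summand $A_i$ and the inequality already holds there; the point is that no new relations crossing summands are introduced, so the axioms hold summandwise and the universal property follows since a cocone out of a coproduct is precisely a family of monotone maps. The genuinely delicate construction is the coequalizer, which is where I expect the main obstacle to lie. For a parallel pair $f,g : A \to A'$ I would first form the quotient set $Q = A'/{\approx}$, where $\approx$ is the equivalence relation on $A'$ generated by $f(a) \approx g(a)$ for all $a \in A$, with quotient map $q : A' \to Q$. On $Q$ I would then take the broad relation $S$ \emph{generated} (using the reflexive-transitive-and-antisymmetric-collapse closure operation $(-)^{rt}_0$ described in the excerpt) by the image under $q$ of $R'$. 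The subtlety is twofold: the closure operation $(-)^{rt}_0$ may further identify elements of $Q$ via the antisymmetric collapse $R_0 = R/{\sim}$, so the underlying set of the coequalizer is a further quotient of $Q$ rather than $Q$ itself; and one must check that this generated broad poset genuinely has the universal property, namely that any monotone $h : A' \to B$ coequalizing $f$ and $g$ factors uniquely through it. The key lemma is that the generating operation $(-)^{rt}_0$ is left adjoint to the inclusion of broad posets into broad relations (reflexivity/transitivity/anti-symmetry closure is a reflection), so that monotone maps out of the generated broad poset correspond exactly to relation-preserving maps out of $q(R')$, and any $h$ coequalizing the pair factors through $q$ on underlying sets and preserves $q(R')$, hence factors through the reflection. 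Assembling products with equalizers and coproducts with coequalizers then yields all small limits and colimits.
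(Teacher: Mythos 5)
Your construction is correct, and it fills in the route the paper only gestures at (``one may easily construct all required limits and colimits directly''); the proof the paper actually elaborates is a different, more conceptual one. The paper invokes Remark \ref{Rem:broad preord as enriched operads}: broad preorders are operads enriched in the truth-value category $V=\{F<T\}$, so completeness and cocompleteness of $\mathbf{bPreOrd}$ follow from general enriched operad theory; limits in $\mathbf{bPos}$ are then computed as in $\mathbf{bPreOrd}$ (anti-symmetry survives limits), and colimits are obtained by applying the reflection $(-)_{0}:\mathbf{bPreOrd}\to\mathbf{bPos}$ to the preorder colimit. Your argument replaces the appeal to enriched operad theory by explicit products, equalizers, coproducts and coequalizers; your product agrees with the binary case the paper records in Remark \ref{Rem:cartesian structure on bPos}, and your coequalizer isolates exactly the same essential point as the paper's colimit step, namely that the closure-plus-antisymmetric-collapse $(R^{rt})_{0}$ is a reflection onto broad posets, so the universal property transfers (your observation that the underlying set may be further quotiented by the collapse is the correct subtlety to flag). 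What the paper's route buys is brevity and a conceptual placement of $\mathbf{bPos}$ within enriched operad theory, consistent with Theorem \ref{thm:Main diagram}; what yours buys is a self-contained, elementary proof with explicit descriptions of all the (co)limits, at the cost of the routine verifications of the universal properties that you defer but that do go through as you expect.
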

\begin{proof}
One may easily construct all required limits and colimits directly.
A more conceptual argument uses Remark \ref{Rem:broad preord as enriched operads}
above. Since the truth values category $V=\{F<T\}$ is complete and
cocomplete it follows from general considerations of enriched operad
theory that the category $\mathbf{bPreOrd}$ of broad preorders is
small complete and small cocomplete. This suffices to construct all
small limits in $\mathbf{bPos}$. To obtain small colimits in $\mathbf{bPos}$
one needs to also employ the functor $(-)_{0}:\mathbf{bPreOrd}\to\mathbf{bPos}$,
obtained by the construction $R\mapsto R_{0}$ described above. \end{proof}
\begin{rem}
\label{Rem:cartesian structure on bPos}Below we show that the cartesian
structure on $\mathbf{bPos}$ is not closed. We thus describe explicitly
the product of two broad posets $A,B$. Such a product is obtained
by endowing the set $A\times B$ with the broad relation where $(a_{1},b_{1})\cdot\cdots\cdot(a_{n},b_{n})\le(a,b)$
holds precisely when the two inequalities $a_{1}\cdot\cdots\cdot a_{n}\le a$
and $b_{1}\cdot\cdots\cdot b_{n}\le b$ hold. The terminal object
$*$ was discussed in Example \ref{Example: singleton broad posets}.
\end{rem}
Note that the category $\mathbf{Pos}$ can be recovered, up to equivalence,
from $\mathbf{bPos}$ by slicing over the broad poset $\star$ (this
is simply the trivial observation that the unique function $A\to\star$
is monotone if, and only if, $A$ is essentially an ordinary poset).
The forgetful functor $\mathbf{bPos}/\star\to\mathbf{bPos}$ gives
an embedding $k_{!}:\mathbf{Pos}\to\mathbf{bPos}$ which is easily
seen to have a right adjoint $k^{*}:\mathbf{bPos}\to\mathbf{Pos}$.
This right adjoint $k^{*}$ sends a broad poset $(A,R)$ to the poset
$(A,S)$ where for $a,a'\in A$ holds $aSa'$ precisely when $aRa'$
holds. 

Obviously, there is a forgetful functor $\Sigma^{*}:\mathbf{bPos_{c}}\to\mathbf{bPos}_{\pi}$
(induced by the evident surjection $A^{\cdot}\to A^{+}$) whose left
adjoint $\Sigma_{!}:\mathbf{bPos}_{\pi}\to\mathbf{bPos_{c}}$ sends
a non-commutative broad poset $R$ to its abelianization.

Recall that a poset $A$ can be considered as a category $\mathscr{C}$
whose objects are the elements of $A$ and such that there is precisely
one arrow $a\rightarrow a'$ in $\mathscr{C}$ whenever $a\le a'$.
One obtains thus a functor $\mathbf{Pos}\to\mathbf{Cat}$. Similarly,
given a broad poset $B$ one can define a (symmetric or planar) operad
$\mathscr{P}$ whose objects are the elements of $B$ and such that
there is exactly one operation in $\mathscr{P}(b_{1},\cdots,b_{n};b)$
whenever $b_{1}\cdot\cdots\cdot b_{n}\le b$. In that way one obtains
the functors $\mathbf{bPos_{c}\to\mathbf{Ope}}$ and $\mathbf{bPos}_{\pi}\to\mathbf{Ope}_{\pi}$.
We summarize the properties of these constructions in the following
theorem. 
\begin{thm}
\label{thm:Main diagram}In the diagram
\[
\xymatrix{\mathbf{bPos_{c}}\ar[rrr]\ar@<2bp>[dd]^{\Sigma^{*}}\ar@<-2bp>[dr]_{k^{*}} &  &  & \mathbf{Ope}\ar@<2bp>[dd]^{\Sigma^{*}}\ar@<2bp>[dl]^{j^{*}}\\
 & \mathbf{Pos}\ar[r]\ar@<-2bp>[ul]_{k_{!}}\ar@<-2bp>[dl]_{k_{!}} & \mathbf{Cat}\ar@<2bp>[ur]^{j_{!}}\ar@<2bp>[dr]^{j_{!}}\\
\mathbf{bPos}_{\pi}\ar[rrr]\ar@<2bp>[uu]^{\Sigma_{!}}\ar@<-2bp>[ru]_{k^{*}} &  &  & \mathbf{Ope}_{\pi}\ar@<2bp>[uu]^{\Sigma_{!}}\ar@<2bp>[lu]^{j^{*}}
}
\]
all pairs of arrows are adjunctions (with left adjoint on the left
or on top) and each of the four triangles that consist of just left
or just right adjoints commutes. The horizontal arrows are embeddings
and each of the two trapezoids commutes. Moreover, each of the right
adjoints other than the left most vertical one is equivalent to the
canonical forgetful functor of a slice category. \end{thm}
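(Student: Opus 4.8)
Theorem \ref{thm:Main diagram} asserts that a large diagram of functors — connecting $\mathbf{bPos_c}$, $\mathbf{bPos_\pi}$, $\mathbf{Pos}$, $\mathbf{Cat}$, $\mathbf{Ope}$, $\mathbf{Ope_\pi}$ — consists entirely of adjoint pairs, that certain triangles and trapezoids commute, that the horizontal arrows are embeddings, and that all but one of the right adjoints is the forgetful functor of a slice category.

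Let me organize what needs to be checked and plan the verification.

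The plan is to verify each structural claim separately, exploiting the commutative/non-commutative parallelism so that I only argue once. I will organize the proof around the three families of functors appearing in the diagram: the \emph{horizontal} pair $\mathbf{bPos}\to\mathbf{Ope}$ (and their two variants), the \emph{slicing} functors $k_!\dashv k^*$ and $j_!\dashv j^*$, and the \emph{abelianization} pair $\Sigma_!\dashv\Sigma^*$.

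The first paragraph of work establishes the adjunctions. For $k_!\dashv k^*$ the excerpt already exhibits $k_!$ as the forgetful functor $\mathbf{bPos}/\star\to\mathbf{bPos}$ and identifies $k^*$ explicitly as $(A,R)\mapsto(A,S)$ with $aSa'\iff aRa'$; I would confirm the triangle identities by checking that the unit sends a poset to itself (since $k^*k_!$ is the identity on $\mathbf{Pos}$) and the counit is the canonical comparison. The adjunction $j_!\dashv j^*$ between $\mathbf{Cat}$ and $\mathbf{Ope}$ is the standard one realizing categories as operads with only unary operations; here $j^*$ keeps the unary part and $j_!$ is its left adjoint, and the same argument runs in the planar setting. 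For $\Sigma_!\dashv\Sigma^*$ I would note that the surjection $A^\cdot\to A^+$ induces the forgetful functor $\Sigma^*$ and that abelianizing the broad relation gives a left adjoint; the universal property is immediate because a monotone map out of the abelianization is exactly a monotone map that is already insensitive to order of factors. The corresponding operadic pair $\Sigma_!\dashv\Sigma^*$ for $\mathbf{Ope}$/$\mathbf{Ope_\pi}$ is symmetrization of operads, which is classical.

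The second paragraph handles commutativity and the embedding claims. That the horizontal arrows are embeddings (injective on objects and fully faithful) follows from Remark \ref{Rem:broad preord as enriched operads}: a broad poset is precisely an operad enriched in $V=\{F<T\}$ whose hom-objects are all $F$ or $T$ and which is skeletal by anti-symmetry, so the functor into $\mathbf{Ope}$ is a full embedding, and likewise $\mathbf{Pos}\hookrightarrow\mathbf{Cat}$. For the \emph{triangles} made of left adjoints, I would check that the two routes $\mathbf{Pos}\to\mathbf{bPos}\to\mathbf{Ope}$ and $\mathbf{Pos}\to\mathbf{Cat}\to\mathbf{Ope}$ send a poset to the same operad (the one with a unique unary operation $a\to a'$ whenever $a\le a'$ and no higher operations); the right-adjoint triangles then commute by uniqueness of adjoints. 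The two \emph{trapezoids} are the assertion that the horizontal embeddings are compatible with abelianization, i.e. that symmetrizing then embedding equals embedding then symmetrizing, which is a direct computation on the defining relations. Finally, for the slice-category claim I would identify $\mathbf{Ope}/j_!(*)$, where $j_!(*)$ is the terminal category viewed as an operad, recognizing its objects as operads equipped with a map to the one-object, one-unary-arrow operad — equivalently, as categories — so that $j^*$ is the forgetful functor of this slice, and symmetrically for the remaining right adjoints.

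\textbf{The main obstacle.} The genuinely delicate point is the last claim — that each right adjoint (save the leftmost vertical one) \emph{is} the forgetful functor of a slice category, rather than merely being right adjoint to an inclusion. This requires pinning down the correct terminal-type object in each codomain (for $k^*$ it is the broad poset $\star$ of Example \ref{Example: singleton broad posets}, for $j^*$ the appropriate unary operad) and proving an equivalence of categories between the slice and the target, naturally compatible with the forgetful functors. Establishing these equivalences — and confirming the \emph{excluded} case, namely why $\Sigma^*:\mathbf{bPos_c}\to\mathbf{bPos_\pi}$ is \emph{not} of this slice form — is where care is needed; the remaining verifications are routine diagram-chases on the explicit descriptions of relations and operations.
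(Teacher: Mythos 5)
The first thing to note is that the paper does not actually prove this theorem: its ``proof'' consists of the single sentence that the claims not already established in the surrounding text are omitted, with a reference to \cite{Weiss:2010fk}. So there is no detailed argument in the paper to compare against; your proposal is, if anything, more explicit than what the author supplies, and your overall plan (verify each adjunction from the explicit descriptions, deduce the right-adjoint commutativities from the left-adjoint ones by uniqueness of adjoints, get the embeddings from the enriched-operad description of broad preorders) matches the constructions the paper sets up before the theorem.

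That said, there is one genuine problem in the part you yourself single out as delicate, namely the slice-category clause. You propose to identify $\mathbf{Ope}/j_{!}(*)$ with $\mathbf{Cat}$ and then assert that $j^{*}$ ``is the forgetful functor of this slice.'' The directions do not match: the canonical forgetful functor of $\mathbf{Ope}/j_{!}(*)$ has domain equivalent to $\mathbf{Cat}$ and codomain $\mathbf{Ope}$, so under your identification it is $j_{!}$, not $j^{*}$ (exactly as the paper's own example exhibits $k_{!}$, a \emph{left} adjoint, as the forgetful functor $\mathbf{bPos}/\star\to\mathbf{bPos}$). The functor corresponding to $j^{*}$ under the equivalence $\mathbf{Cat}\simeq\mathbf{Ope}/j_{!}(*)$ is rather the right adjoint $P\mapsto P\times j_{!}(*)$ of that forgetful functor. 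Moreover, treating the remaining case ``symmetrically'' hides the fact that $\Sigma^{*}:\mathbf{Ope}\to\mathbf{Ope}_{\pi}$ fits a different pattern: there the \emph{domain} is equivalent to a slice of the codomain (symmetric operads as planar operads over the operad of symmetries), and $\Sigma^{*}$ really is that slice's forgetful functor; this asymmetry is also what makes the excluded case $\Sigma^{*}:\mathbf{bPos_{c}}\to\mathbf{bPos}_{\pi}$ fail, as the paper hints when it says the left triangle ``is not quite a slice'' of the right one. A smaller point: what you call the left-adjoint ``triangles'' (the two routes $\mathbf{Pos}\to\mathbf{Ope}$) are the theorem's trapezoids; the triangles are the $\mathbf{bPos_{c}}$--$\mathbf{Pos}$--$\mathbf{bPos}_{\pi}$ and $\mathbf{Ope}$--$\mathbf{Cat}$--$\mathbf{Ope}_{\pi}$ faces, i.e.\ compatibility of $k_{!}$ and $j_{!}$ with $\Sigma_{!}$. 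These mislabels do not affect which computations you would perform, but the slice-functor direction issue would need to be sorted out before the last clause of the theorem could be considered proved.
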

\begin{proof}
We omit the proofs of the claims not given above and refer the reader
to \cite{Weiss:2010fk} for more information on some of the properties
concerning the triangles on the right. 
\end{proof}

\subsection{Tensor products}

The category $\mathbf{Pos}$ is cartesian closed with the straightforward
definition of products of posets. The internal hom, for two posets
$P,Q\in ob(\mathbf{Pos})$, is the poset $[P,Q]$ of all monotone
functions $f:P\to Q$ where $f\le g$ holds precisely when, for all
$p\in P$, the inequality $f(p)\le g(p)$ holds. This monoidal structure
is inherited from the closed cartesian structure on $\mathbf{Cat}$
along the embedding $k_{!}:\mathbf{Pos}\to\mathbf{Cat}$. It is known
that the category $\mathbf{Ope}$ is cartesian but not cartesian closed
and that it does posses a symmetric closed monoidal structure, given
by the Boardman-Vogt tensor products (\cite{MR0420609}), that restricts
along $j_{!}:\mathbf{Cat}\to\mathbf{Ope}$, to the cartesian product
of categories. We now show that similar results are true for broad
posets. 
\begin{prop}
The category $\mathbf{bPos}$ is cartesian but not cartesian closed. \end{prop}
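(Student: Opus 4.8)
The cartesian part is immediate: by Theorem \ref{thm:comp cocomp} the category $\mathbf{bPos}$ has all finite products, and the terminal object together with the binary product are described explicitly in Example \ref{Example: singleton broad posets} and Remark \ref{Rem:cartesian structure on bPos}. For the failure of closedness the plan is to exploit that in any cartesian closed category each functor $A\times(-)$ is a left adjoint and hence preserves all colimits; I will exhibit an $A$ and a small colimit that $A\times(-)$ does not preserve. A first instinct is to test coproducts, but here one checks directly from the product formula of Remark \ref{Rem:cartesian structure on bPos} that $\mathbf{bPos}$ is distributive, so finite (indeed all) coproducts are preserved by every $A\times(-)$; the obstruction must therefore be sought among connected colimits such as pushouts.

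The counterexample I propose uses the interaction between nullary relations and arity. Take $A=\gamma_2$ and let $Q$ be the pushout obtained by grafting the stump $\gamma_0$ onto one leaf of $\gamma_2$, i.e. the pushout of $\gamma_2 \leftarrow \star \rightarrow \gamma_0$ where $\star\to\gamma_2$ selects the leaf $l_1$ and $\star\to\gamma_0$ is the unique such map. Writing $\gamma_2=\{r,l_1,l_2\}$ with $l_1\cdot l_2\le r$ and $\gamma_0=\{s\}$ with $\epsilon\le s$, the generated broad poset $Q$ carries, besides $l_1\cdot l_2\le r$ and $\epsilon\le l_1$, the derived \emph{unary} relation $l_2\le r$ obtained by transitivity from $l_1\cdot l_2\le r$ and $\epsilon\le l_1$ (operadically, feeding the constant into the binary vertex). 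Consequently $A\times Q$ contains the unary relation $(r,l_2)\le(r,r)$. On the other hand I will compute the pushout $P$ of $A\times\gamma_2\leftarrow A\times\star\rightarrow A\times\gamma_0$; the point is that $A\times\gamma_0$ has \emph{no} nullary relation, because a nullary relation in a product requires a nullary relation in each factor and $A=\gamma_2$ has none. Thus the stump $\epsilon\le s$ is annihilated after multiplying by $A$, the only surviving nonreflexive relations in $P$ are the two binary ones coming from $A\times\gamma_2$, and the canonical comparison map $P\to A\times Q$ --- a bijection on underlying sets --- fails to be an isomorphism, since $(r,l_2)\le(r,r)$ holds in $A\times Q$ but not in $P$.

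The step I expect to be the main obstacle is verifying that the relation $(r,l_2)\le(r,r)$ genuinely does \emph{not} hold in $P$, i.e. that the reflexive-transitive closure generating the pushout broad poset cannot recreate it. This reduces to the observation that a nonreflexive unary relation can only be produced by reducing the arity of a higher relation through a nullary relation on one of its inputs; since $P$ possesses no nullary relation whatsoever, no such arity reduction is available and the binary relations of $A\times\gamma_2$ can never collapse to a unary one. Establishing this rigorously amounts to giving an explicit description of the broad poset generated by the displayed relations and checking that it is already closed under the transitivity axiom, which is routine once the absence of nullary relations is isolated as the key feature. The underlying phenomenon is exactly the operadic one: the cartesian product destroys constants, so it cannot commute with the grafting of stumps, and this is precisely what obstructs closedness.
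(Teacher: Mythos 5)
Your proof is correct and follows essentially the same strategy as the paper: both establish cartesianness from the existence of small products and then refute closedness by exhibiting a pushout (a grafting of corollas along a map from $\star$) that is not preserved by a functor $\gamma_k\times(-)$, the common mechanism being that the cartesian product only retains relations of matching arity in both factors while transitivity in the pushout creates relations of new arities. The only difference is the choice of witness --- the paper grafts $\gamma_{2}$ onto $\gamma_{2}$ and multiplies by $\gamma_{3}$, whereas you graft the stump $\gamma_{0}$ onto $\gamma_{2}$ and multiply by $\gamma_{2}$ --- and your verification that $(r,l_{2})\le(r,r)$ cannot be derived in the pushout of the products is sound.
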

\begin{proof}
By Theorem \ref{thm:comp cocomp} the category $\mathbf{bPos}$ has
all small products and is thus cartesian. To show that the cartesian
structure (given explicitly in Remark \ref{Rem:cartesian structure on bPos})
is not closed recall the definition of corollas from Example \ref{Exam:n-corolla as broad poset}
and consider the pushout 
\[
\xymatrix{{\star}\ar[r]\ar[d] & \gamma_{2}\ar[d]\\
\gamma_{2}\ar[r] & X
}
\]
where one of the arrows $\star\to\gamma_{2}$ choses $r\in\gamma_{2}$
and the other one chooses $l_{1}\in\gamma_{2}$. It is easy to see
that this pushout is not preserved under the functor $\gamma_{3}\times-:\mathbf{bPos}\to\mathbf{bPos}$,
thus proving the claim. \end{proof}
\begin{defn}
\label{def:tensorProductOfBroadPosets}Let $A$ and $B$ be two broad
posets. Their \emph{tensor product} $A\otimes B$ is the set $A\times B$
with the broad poset generated by the broad relation in which\end{defn}
\begin{itemize}
\item for every $a\in A$ if $b_{1}\cdot\cdots\cdot b_{n}\le b$ then $(a,b_{1})\cdot\cdots\cdot(a,b_{n})\le(a,b)$,
and
\item for every $b\in B$ if $a_{1}\cdot\cdots\cdot a_{m}\le a$ then $(a_{1},b)\cdot\cdots\cdot(a_{m},b)\le(a,b).$
\end{itemize}
Note that these defining relations guarantee that for every $a\in A$
the function $a\otimes-:B\to A\otimes B$, given by $b\mapsto(a,b)$,
is monotone and similarly that for every $b\in B$ the function $-\otimes b:A\to A\otimes B$,
given by $a\mapsto(a,b)$, is monotone. 
\begin{thm}
The category $\mathbf{bPos}$ with the tensor product of broad posets
is a symmetric closed monoidal category, and $k_{!}:\mathbf{Pos}\rightarrow\mathbf{bPos}$
is strong monoidal. \end{thm}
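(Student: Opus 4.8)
The plan is to establish the three required structures in order: the symmetric monoidal structure, its closedness, and the strong monoidality of $k_!$.

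First I would verify that $\otimes$ is a well-defined symmetric monoidal product. The defining relations are visibly symmetric in the two coordinates (the two bullets in Definition \ref{def:tensorProductOfBroadPosets} are interchanged by swapping $A$ and $B$), so the symmetry isomorphism $A\otimes B\cong B\otimes A$ is immediate once functoriality is checked. The unit should be the corolla-like object $\star$ of Example \ref{Example: singleton broad posets}, since a monotone map $\star\to A$ is the same as an element of $A$; I would check $A\otimes\star\cong A$ by observing that the generating relations collapse to exactly the relations of $A$. The associativity isomorphism $(A\otimes B)\otimes C\cong A\otimes(B\otimes C)$ requires comparing the two broad posets generated on the common underlying set $A\times B\times C$; the main point is that both generated broad relations coincide, which reduces to a transitivity argument showing that any inequality built by mixing the three families of one-coordinate-at-a-time relations can be reassociated. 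I would verify the pentagon and triangle coherence axioms, which hold because all the structure isomorphisms are identities on underlying sets, so coherence is automatic.

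Next, for closedness, the plan is to construct an internal hom $[B,C]$ together with a natural bijection
\[
\mathbf{bPos}(A\otimes B,C)\cong\mathbf{bPos}(A,[B,C]).
\]
Guided by Remark \ref{Rem:broad preord as enriched operads}, which identifies broad preorders with operads enriched in $V=\{F<T\}$, I expect $[B,C]$ to have underlying set the monotone functions $B\to C$, with a broad relation $f_1\cdot\cdots\cdot f_n\le g$ declared to hold precisely when $f_1(b_1)\cdot\cdots\cdot f_n(b_n)\le g(b)$ in $C$ for all $b_1\cdot\cdots\cdot b_n\le b$ in $B$. I would check that this is a broad poset (reflexivity, transitivity and anti-symmetry), and that the assignment is functorial in both variables. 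The adjunction bijection then proceeds by currying: a monotone $h:A\otimes B\to C$ sends each $a\in A$ to the function $b\mapsto h(a,b)$, and the two bullets defining $\otimes$ translate exactly into the statement that this curried map $A\to[B,C]$ is monotone, while conversely an element of $\mathbf{bPos}(A,[B,C])$ uncurries to a map respecting both generating families of relations on $A\otimes B$.

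Finally, strong monoidality of $k_!:\mathbf{Pos}\to\mathbf{bPos}$ amounts to exhibiting a coherent isomorphism $k_!(P)\otimes k_!(Q)\cong k_!(P\times Q)$ and $k_!(\mathbf{1})\cong\star$. Since $k_!$ realizes a poset as a broad poset living in the slice over $\star$ (where, as noted after Theorem \ref{thm:Main diagram}, all inequalities are binary of the form $a\le a'$), the tensor product restricted to such objects should reduce to the cartesian product of the underlying posets; I would check that the generated broad relation on $P\times Q$ contains only the binary inequalities $(p,q)\le(p',q')$ iff $p\le p'$ and $q\le q'$, matching the product order, and that no higher-arity relations are generated. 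The hardest part, I expect, is the associativity isomorphism for $\otimes$: because $\otimes$ is defined via a generated broad poset rather than an explicit relation, proving that the two iterated tensors generate literally the same broad relation on $A\times B\times C$ requires a careful inductive analysis of how transitivity combines the mixed-coordinate generators, and this is where I would concentrate the technical effort.
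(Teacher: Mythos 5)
Your outline of the symmetric monoidal structure (unit $\star$, symmetry, associativity checked on the generated relations over $A\times B\times C$) and of the strong monoidality of $k_{!}$ is consistent with the paper's (very terse) proof. The genuine gap is in your internal hom. You declare $f_{1}\cdot\cdots\cdot f_{n}\le g$ in $[B,C]$ to hold when $f_{1}(b_{1})\cdot\cdots\cdot f_{n}(b_{n})\le g(b)$ for \emph{all} $b_{1}\cdot\cdots\cdot b_{n}\le b$ in $B$. Uncurrying, a monotone map $A\to[B,C]$ with this definition is exactly a function $A\times B\to C$ that is monotone for the \emph{cartesian} product of Remark \ref{Rem:cartesian structure on bPos} (where $(a_{1},b_{1})\cdot\cdots\cdot(a_{n},b_{n})\le(a,b)$ iff $a_{1}\cdot\cdots\cdot a_{n}\le a$ and $b_{1}\cdot\cdots\cdot b_{n}\le b$), not for the tensor product of Definition \ref{def:tensorProductOfBroadPosets}. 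So your claimed bijection would exhibit a right adjoint to $-\times B$, which the paper has just shown cannot exist. Concretely, write $\gamma_{2}=\{r,l_{1},l_{2}\}$ and let $\gamma_{2}'=\{r',l_{1}',l_{2}'\}$ be a second copy; the identity of $\gamma_{2}\otimes\gamma_{2}'$ fails to curry to a monotone map $\gamma_{2}\to[\gamma_{2}',\gamma_{2}\otimes\gamma_{2}']$ under your definition, because that would require $(l_{1},l_{1}')\cdot(l_{2},l_{2}')\le(r,r')$, whereas the generators and transitivity only ever produce the two four-element shuffles such as $(l_{1},l_{1}')\cdot(l_{2},l_{1}')\cdot(l_{1},l_{2}')\cdot(l_{2},l_{2}')\le(r,r')$ (any two-element tuple below $(r,r')$ must be one of the two generating ones, since the only unary relations available for substitution are reflexivities).

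The fix is the pointwise, diagonal hom that the paper uses: $f_{1}\cdot\cdots\cdot f_{n}\le f$ in $[B,C]$ iff $f_{1}(b)\cdot\cdots\cdot f_{n}(b)\le f(b)$ for every single $b\in B$. With this definition a monotone map $A\to[B,C]$ is precisely a function $A\times B\to C$ that is monotone in each variable separately, which is exactly a monotone map out of $A\otimes B$, since the tensor product is generated by the two one-coordinate-at-a-time families of relations (and a function preserving the generators of a generated broad poset preserves the whole generated relation). The remainder of your argument, including the observation that $k_{!}(P)\otimes k_{!}(Q)$ carries only the unary relations of the product order because all generators coming from $k_{!}(P)$ and $k_{!}(Q)$ are unary, goes through unchanged once the hom is corrected.
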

\begin{proof}
The broad poset $\star$ is clearly a unit for the tensor product
and it is easily verified that $\otimes$ makes $\mathbf{bPos}$ into
a symmetric monoidal category, so all that is left to do is describe
the internal hom. Given two broad posets $A$ and $B$, the set $[A,B]$
of all monotone functions $f:A\to B$ is made into a broad poset by
setting $f_{1}\cdot\cdots\cdot f_{n}\le f$ precisely when for every
$a\in A$ the inequality $f_{1}(a)\cdot\cdots\cdot f_{n}(a)\le f(a)$
holds. It is routine to verify that this broad poset is the required
internal hom. The fact that $k_{!}:\mathbf{Pos}\rightarrow\mathbf{bPos}$
is strong monoidal is trivial.
\end{proof}
Returning to the diagram of Theorem \ref{thm:Main diagram}, we see
that all of the categories there are equipped with symmetric closed
monoidal structures given by the Boardman-Vogt tensor product of operads
and tensor product of broad posets (for the corner categories), and
the cartesian structure (for the remaining two). With these monoidal
structures, each functor labeled by a $-_{!}$ is strong monoidal.
The precise monoidal behaviour of the other functors is omitted here
except for the following interesting observation. Given commutative
broad posets $A,B$ the formula
\[
\Sigma_{!}(\Sigma^{*}A\otimes\Sigma^{*}B)\cong A\otimes B
\]
holds. The same formula does not remain valid if $A$ and $B$ are
symmetric operads. This phenomenon is related to the fact that, in
the above diagram, the triangle on the left is not quite a slice of
the triangle on the right. Operads have a much greater expressive
power than broad posets do at a cost of requiring more elaborate structure.
That extra structure, in those operads that are essentially broad
posets, manifests itself by redundancy (e.g., the functor $\mathbf{bPos_{c}}\to\mathbf{Ope}$
sends $\star$ not to the terminal operad $\mathbf{Comm}$ but rather
to the operad $\mathbf{As}$ having just one object but $n!$ arrows
of arity $n$ for each $n\ge0$). The lack of this redundancy in broad
posets allows for the formula above.

\section{\label{sec:Dendroidally-ordered-sets}Dendroidally ordered sets}

This section introduces the main concept of this work, that of a dendroidally
ordered set, as a broad poset satisfying three axioms. Several of
the intuitive tree notions from Section \ref{sec:Trees-and-operads}
are established as consequences of these axioms to be used in the
subsequent sections, and a definition of the dendroidal category is
given in terms of dendroidally ordered sets and monotone functions.

\subsection{The tree formalism}

A broad poset $\le$ induces a partial order relation on the set $A^{*}$
as follows. For $a,b\in A^{*}$ we say that $a\le b$ if $b=b_{1}\cdot\cdots\cdot b_{n}$,
with each $b_{i}\in A$, and if there exist $a_{1},\cdots,a_{n}\in A^{*}$
such that $a=a_{1}\cdot\cdots\cdot a_{n}$ and such that $a_{i}\le b_{i}$
holds for each $1\le i\le n$. Notice that it is harmless to use the
same symbol $\le$ both for the broad poset on $A$ and for the induced
relation on $A^{*}$.

For $a,b\in A$ we say that $b$ is a \emph{descendent} of $a$ and
write $b\le_{d}a$, if there is some $b'\in A^{*}$ such that both
$b'\le a$ and $b\in b'$ hold. Clearly, $\le_{d}$ is a preorder
on $A$. If it is a poset then we say that the broad poset $\le$
is \emph{stratified. }A broad poset $(A,\le)$ is \emph{finite} if
the set $\le$ is finite, in which case it is automatically stratified.
For an element $a\in A$ we write $\hat{a}=\{b\in A^{*}|b<a\}$. 
\begin{defn}
Let $A$ be a broad poset and $a\in A$. If $\hat{a}=\emptyset$ then
$a$ is called a \emph{leaf}. Otherwise, if $\hat{a}$ has a maximum,
denoted by $a^{\uparrow}$, then $a$ is said to \emph{have children
}and each element in $a^{\uparrow}$ is a \emph{child }of $a$\emph{.} 
\end{defn}
Clearly it is not always the case that an element $a\in A$ is either
a leaf or has children.
\begin{rem}
Notice that it is possible that $a^{\uparrow}=x\in A$. More importantly,
it is also possible that $a^{\uparrow}=\epsilon$, the monoid unit.
In that case, $a$ is not a leaf nor does it have any element $x\in A$
as a child. Such an $a$ is called a \emph{stump}, the existence of
which is an important aspect of the formalism that agrees with the
interpretation, in operad theory, of $0$-ary operations as constants.
In each of these cases it is grammatically incorrect to say that $a$
has children but we will ignore such linguistic difficulties. \end{rem}
\begin{defn}
A \emph{dendroidally ordered set} is a finite broad poset $A$ satisfying,
for all $a_{1},\cdots,a_{n},a\in A,n\ge0$, the following three conditions.\end{defn}
\begin{itemize}
\item $\le$ is \emph{simple }in the sense that if $a_{1}\cdot\cdots\cdot a_{n}\le a$
then $a_{i}=a_{j}$ implies $i=j$. 
\item The poset $(A,\le_{d})$ has a minimal element $r_{A}$ called the
\emph{root. }
\item If $a$ is not a leaf then it has children. 
\end{itemize}
Conforming with our convention this definition actually defines two
concepts: commutative and non-commutative dendroidally ordered sets.
The use of the term 'dendroidally ordered set' is meant to be replaced
throughout by one of the two. 

When $A$ is dendroidally ordered we will also refer to its elements
as \emph{edge}s. The following useful proposition establishes several
of the intuitive concepts of trees described Section \ref{sec:Trees-and-operads}
as consequences of the axioms. 
\begin{prop}
\label{prop:elementary properties}For every dendroidally ordered
set $A$ and edges $a,a_{1},a_{2},b\in A$ the following properties
hold.\end{prop}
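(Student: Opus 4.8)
The plan is to obtain every item of the proposition as a direct consequence of the three dendroidal axioms---simplicity, the existence of the root $r_A$, and the requirement that each non-leaf has children---together with the ambient broad poset axioms of reflexivity, transitivity and anti-symmetry, using finiteness of $\le$ to license inductions along the descendant preorder $\le_d$. Since each listed property is a local assertion about the named edges $a,a_1,a_2,b$ rather than a global construction, I expect short case analyses rather than any single long argument, and I would arrange the items so that the more basic relational facts are proved first and then fed into the structural ones.

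First I would pin down the interplay between the three relations in play: the broad relation $\le$ on $A$, the induced order $\le$ on $A^*$, and the descendant preorder $\le_d$. The key unwinding is that $b\le_d a$ means precisely that there is a witness $b'\in A^*$ with $b\in b'$ and $b'\le a$, and that the broad transitivity axiom lets one splice such witnesses together; this is the mechanism behind essentially every subsequent step. To upgrade $\le_d$ from a preorder to a genuine partial order on the edges, I would argue that a nontrivial cycle $a\le_d b\le_d a$ with $a\neq b$ would, after invoking simplicity to forbid repetitions and anti-symmetry to forbid two-element loops, generate a strictly $\le$-descending sequence, contradicting finiteness of $\le$.

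Next I would handle the items about children, parents and joins. For any claim that the children of $a$ are well determined I would appeal directly to the definition of $a^{\uparrow}$ as the maximum of $\hat a=\{b\in A^*\mid b<a\}$: the maximum is unique when it exists, and simplicity ensures no edge is repeated among the children. For uniqueness of the parent of a non-root edge I would run an induction seeded by $r_A$ and close it with anti-symmetry, showing that if $a$ were a child of two edges $p$ and $q$ then comparing $p^{\uparrow}$ and $q^{\uparrow}$ through transitivity collapses $p=q$. For the existence of a join (nearest common ancestor) of $a_1$ and $a_2$ I would consider the set of common ancestors, namely edges $c$ with $a_1\le_d c$ and $a_2\le_d c$; this set is nonempty because $r_A$ is a common ancestor of everything, and finiteness then yields a nearest such $c$, whose uniqueness follows because the ancestors of a single edge are totally ordered by $\le_d$.

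The step I expect to be the main obstacle is the constant bookkeeping forced by the $A^*$-valued witnesses: the broad relation compares a formal product $b_1\cdot\cdots\cdot b_n$ against a single edge $a$, so each argument must simultaneously track individual edges and the products they assemble into, and must be written so that it reads identically in the commutative case $A^+$ and the non-commutative case $A^{\cdot}$. In particular the transitivity axiom, which replaces each factor $a_i$ by a product $b_i$, is delicate precisely when some $b_i=\epsilon$; this is the stump case, where $a^{\uparrow}=\epsilon$ and $a$ has no edge as a child at all, and I expect the subtlest point to be establishing the parent and children claims without inadvertently mishandling stumps.
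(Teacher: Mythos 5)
Your plan correctly identifies the ambient machinery (splicing $A^*$-witnesses via transitivity, simplicity, finiteness, the root axiom), and your route to binary joins --- take the minimum of the set of common ancestors, using that the ancestors of a fixed edge form a chain --- is a legitimate alternative to the paper's argument, which instead ascends by replacing $a_1,a_2$ with their parents and observing $p_1\vee p_2=a_1\vee a_2$. But as it stands the proposal has two genuine gaps.

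First, you never address item (2) of the proposition: that two descendance-incomparable edges $a_1,a_2\le_d b$ admit a \emph{single} tuple $c\in A^*$ with $a_1,a_2\in c$ and $c\le b$. This is the most delicate item --- the paper proves it by descending from $b$ through children until the witnesses for $a_1$ and $a_2$ first diverge, then assembling $c$ by transitivity --- and it is load-bearing: the paper uses exactly this tuple to rule out the ``incomparable parents'' case in the uniqueness of the parent (item (3)). Your sketch of parent uniqueness, ``comparing $p^{\uparrow}$ and $q^{\uparrow}$ through transitivity collapses $p=q$'' closed by anti-symmetry, does not work as stated: anti-symmetry of the broad relation only yields $p=q$ from $p\le q$ and $q\le p$, which is not what the situation provides. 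The actual argument requires a trichotomy on whether $p$ and $q$ are $\le_d$-comparable, with the comparable cases killed by finiteness and simplicity and the incomparable case killed by item (2) plus transitivity. Second, item (1) --- that every strict descendant of $b$ descends through a unique child of $b$ --- is not the same as the observation that the maximum of $\hat b$ is unique; you need the step that a witness $x\le b$ lies in $\hat b$, hence $x\le b^{\uparrow}$, so each element of $x$ is a descendant of some child, with uniqueness from simplicity. Your chain-of-ancestors argument for joins also silently depends on items (1) and (3), so the order of the items matters and cannot be treated as independent ``short case analyses.''
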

\begin{enumerate}
\item If $a<_{d}b$ then there is a unique child $t\in b^{\uparrow}$ for
which $a\le_{d}t$.
\item If $a_{1},a_{2}$ are descendance incomparable then the inequalities
$a_{1}\le_{d}b$ and $a_{2}\le_{d}b$ together imply the existence
of a single $c\in A^{*}$ for which both $a_{1},a_{2}\in c$ and $c\le b$
hold.
\item If $a$ is not the root then $a\in x^{\uparrow}$ holds for a unique
edge $x\in A$, called its \emph{parent}.
\item The poset $(A,\le_{d})$ has all binary joins. \end{enumerate}
\begin{proof}
~\end{proof}
\begin{enumerate}
\item $a<_{d}b$ implies that for some $x\in A^{*}$ both $a\in x$ and
$x\le b$, and so $x\in\hat{b}$. Thus, $x\le b^{\uparrow}$ which,
by definition, implies that $a\le_{d}t$ for some $t\in b^{\uparrow}$.
Uniqueness follows since the existence of two distinct such children
contradicts simplicity. 
\item Either $a_{1}=b$ or $a_{2}=b$ would imply comparability and thus
there are $t_{1},s_{1}\in b^{\uparrow}$ for which both $a_{1}\le_{d}t_{1}$
and $a_{2}\le_{d}s_{1}$ hold. If $t_{1}=s_{1}$ then repeat the argument
with $b_{1}=t_{1}$ instead of $b$ until the first $b_{n}$ where
the associated $t_{n+1}$ and $s_{n+1}$ are distinct (which must
occur since $a_{1}$ and $a_{2}$ are not comparable). Thus, we have
$a_{1},a_{2}\le_{d}t_{k}$ and $t_{k}\in t_{k-1}^{\uparrow}$ for
all $0\le k\le n$ (agreeing that $t_{0}=b$) while $a_{1}\le_{d}t_{n+1}$,
$a_{2}\le_{d}s_{n+1}$ and $t_{n+1}\ne s_{n+1}$. Using transitivity
one now easily constructs the desired tuple $c$. 
\item We may construct a sequence $t_{1}<_{d}t_{2}<_{d}t_{3}<\cdots$ such
that $t_{0}=r$ and for every $k\ge0$ holds that $a<_{d}t_{k}$ and
$t_{k+1}\in t_{k}^{\uparrow}$. Since the sequence must be finite
we obtain, for the last term $t_{m}$, that $a=t_{m}\in t_{m-1}^{\uparrow}$
. To prove uniqueness assume that $a$ is a child of both $x_{1}$
and $x_{2}$ with $x_{1}\ne x_{2}$. If $x_{1}<_{d}x_{2}$ then $x_{1}\le_{d}t$
for some $t\in x_{2}^{\uparrow}.$ One easily sees then that the case
$t=a$ contradicts with $\le$ being finite while the case $t\ne a$
contradicts with simplicity. Similarly, $x_{2}<_{d}x_{1}$ leads to
a contradiction leaving us with $x_{1}$ and $x_{2}$ incomparable.
But in that case find $c\in A^{*}$ such that $x_{1},x_{2}\in c$
and $c\le r$ to obtain a contradiction by using transitivity and
$a\in x_{1}^{\uparrow}$ and $a\in x_{2}^{\uparrow}$. 
\item We may assume that $a_{1}$ and $a_{2}$ are incomparable, and thus
none is the root, and proceed to construct their join. Let $p_{1}$
be the parent of $a_{1}$ and $p_{2}$ the parent of $a_{2}$. It
is not hard to see that $p_{1}\vee p_{2}=a_{1}\vee a_{2}$. Thus,
if $p_{1}$ and $p_{2}$ are comparable then the join is found and
otherwise the same process can be repeated. This process is bounded
by the root $r_{A}$ and thus will terminate after a finite number
of times with the desired join. 
\end{enumerate}
It is obvious that if $A\ne\emptyset$ is a finite linearly ordered
set, then the broad poset $k_{!}(A)$ is dendroidally ordered. Note
that $\le_{d}$ will have the empty join if, and only if, $A$ has
a single leaf, in which case the broad poset $A$ is essentially equal
to $k_{!}(P)$ for some linear order $P$.

\subsection{The dendroidal category}
\begin{defn}
\label{Omega - algebraic definition}The \emph{dendroidal} \emph{category}
$\Omega$ is the full subcategory of $\mathbf{bPos}$ spanned by the
dendroidally ordered sets. 
\end{defn}
Conforming with our convention we just defined two categories: $\Omega_{c}\subseteq\mathbf{bPos}_{c}$
and $\Omega_{\pi}\subseteq\mathbf{bPos}_{\pi}$, and $\Omega$ is
intended to be replaced throughout by one of the two. 

Since the simplicial category $\Delta$ is equivalent to the full
subcategory of $\mathbf{Pos}$ spanned by the finite non-empty linear
orders we may use $k_{!}$ to identify $\Delta$ as a full subcategory
of both $\Omega_{c}$ and $\Omega_{\pi}$. Now consider the diagram
\[
\xymatrix{\mathbf{Ope_{\pi}} & \Omega_{\pi}\ar[d]\ar[l]_{j_{\pi}} & \Delta\ar[r]\ar[d]\ar[r]^{i}\ar[l]_{i} & \Omega_{c}\ar[d]\ar[r]^{j_{c}} & \mathbf{Ope}\\
 & \mathbf{bPos_{\pi}}\ar[lu] & \mathbf{Pos}\ar[r]^{k_{!}}\ar[l]_{k_{!}} & \mathbf{bPos_{c}}\ar[ur]
}
\]
where the arrows are those discussed above (and the arrows $\Omega_{c}\to\mathbf{Ope}$
and $\Omega_{\pi}\to\mathbf{Ope_{\pi}}$ are defined to make the triangles
commute). From the results below it will follow that the image of
$j_{c}$ is equivalent to the dendroidal category, and similarly the
image of $j_{\pi}$ is equivalent to the planar dendroidal category,
defined in \cite{MR2797154} in terms of operads. 
\begin{rem}
In fact the equivalence can be strengthened to an isomorphism by considering
a formalism of trees, as is done in \cite{Weiss:2010fk}, where vertices
do not exist independently of the edges but rather appear as a by
product of some structure on the edges. 
\end{rem}

\section{\label{sec:The-correspondence-between}trees and dendroidally ordered
sets}

This section studies a grafting operation for dendroidally ordered
sets that allows for precise constructions turning trees into dendroidally
ordered sets and vice versa. These constructions are the object part
of an equivalence of categories between the dendroidal category defined
in terms of operads (as in \cite{MR2797154}) and the one defined
in terms of dendroidally ordered sets.

\subsection{Grafting dendroidally ordered sets}
\begin{defn}
Let $A$ and $B$ be two dendroidally ordered sets, $\star\to A$
a leaf, and $\star\to B$ the root. A \emph{grafting }of $B$ on $A$,
denoted by $A\circ B$, is a pushout
\[
\xymatrix{\star\ar[r]\ar[d] & A\ar[d]\\
B\ar[r] & A\circ B
}
\]
in $\mathbf{bPos}$.
\end{defn}
By renaming the elements of $A$ and $B$ if needed we may assume
that $A\cap B=\{y\}$, where $y$ is the chosen leaf of $A$ and the
root of $B$. Then a grafting is obtained as the broad poset generated
by the broad relation on $A\cup B$ consisting of the relations coming
from $A$ and $B$. It is easily given explicitly: the inequality
$z\le x$ holds in $A\cup B$ if it holds in either $A$ or $B$ or
if the following holds. There exists $a_{1},a_{2}\in A^{*}$ and $b\in B^{*}$
such that $z=a_{1}ba_{2}$, $b\le y$, and $a_{1}ya_{2}\le x$. 

It is thus easily seen that the grafting of two dendroidally ordered
sets is again a dendroidally ordered set, leading to the following
corollary. 
\begin{cor}
\label{cor:pushout in omega}The grafting $B\circ A$ can be computed
in either $\mathbf{bPos}$ or $\Omega$ with isomorphic results. 
\end{cor}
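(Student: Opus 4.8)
The plan is to read the corollary as a purely formal consequence of the paragraph immediately preceding it together with Definition~\ref{Omega - algebraic definition}. The grafting $A\circ B$ is \emph{defined} as a pushout computed in $\mathbf{bPos}$, and the discussion just above the corollary establishes that this pushout is again a dendroidally ordered set, hence an object of $\Omega$. Since $\Omega$ is by definition the \emph{full} subcategory of $\mathbf{bPos}$ spanned by the dendroidally ordered sets, the corollary will follow from the general principle that a pushout square in a category $\mathscr{D}$, all of whose objects lie in a full subcategory $\mathscr{C}$, is automatically a pushout in $\mathscr{C}$.

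First I would record that all four objects of the defining square---namely $\star$, $A$, $B$, and $A\circ B$---are dendroidally ordered: the object $\star$ is $k_{!}$ of the singleton linear order, $A$ and $B$ are so by hypothesis, and $A\circ B$ is so by the preceding remark. Fullness of $\Omega$ then places every morphism of the square in $\Omega$ as well. Next I would transfer the universal property. Given any dendroidally ordered set $Q$ equipped with monotone maps $A\to Q$ and $B\to Q$ that agree after restriction along $\star$, fullness identifies these with a compatible cocone in $\mathbf{bPos}$; since $A\circ B$ is the pushout there, a unique mediating monotone map $A\circ B\to Q$ exists in $\mathbf{bPos}$, and because both source and target lie in $\Omega$ this map is a morphism of $\Omega$, its uniqueness being inherited verbatim. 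Thus the same object and structure maps satisfy the universal property in $\Omega$, which is exactly the asserted agreement of the two computations up to canonical isomorphism.

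The genuine content does not reside in this transfer but in its hypothesis, namely the claim that the grafting of two dendroidally ordered sets is again dendroidally ordered; this is where the only real obstacle lies. Verifying it amounts to checking the three defining axioms against the explicit description of the generated broad relation on $A\cup B$: simplicity follows from simplicity in $A$ and in $B$ together with the fact that $y$ is a leaf of $A$ and the root of $B$, so that the seam identifies no two distinct edges and duplicates none; the root $r_{A}$ remains minimal for $\le_{d}$ because every edge of $B$ is a descendant of $y$ and hence of $r_{A}$; and every non-leaf has children, the only delicate case being $y$ itself, which as a leaf of $A$ has none there but acquires exactly the children it carries as the root of $B$. Granting this, as the excerpt does, the corollary is immediate.
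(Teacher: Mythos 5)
Your argument is correct and coincides with the paper's: the paper likewise derives the corollary from the observation, made in the paragraph preceding it, that the explicitly described pushout in $\mathbf{bPos}$ is again dendroidally ordered, so that fullness of $\Omega$ in $\mathbf{bPos}$ transfers the universal property. Your write-up merely makes explicit the transfer step and the verification of the three axioms, both of which the paper leaves as ``easily seen.''
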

By repeated grafting one can define a full grafting operation 
\[
A\circ(B_{1},\cdots,B_{n})
\]
which is simply an $n$-fold pushout. 

For a dendroidally ordered set $A$ and $a\in A$ let $A_{a}=\{a'\in A|a'\le_{d}a\}$
with the induced broad relation from $A$. It is immediate that $A_{a}$
is again dendroidally ordered. For a dendroidally ordered set $A$
with root $r$ and $r^{\uparrow}=\{a_{1},\cdots,a_{n}\}$ let $A_{root}=\{r,a_{1},\cdots,a_{n}\}$,
viewed as an $n$-corolla $\gamma_{n}$.
\begin{lem}
\label{lem:fund decom}For a dendroidally ordered set $A$ with root
$r$ and $r^{\uparrow}=\{a_{1},\cdots,a_{n}\}$ holds that $A\cong A_{root}\circ(A_{a_{1}},\cdots,A_{a_{n}})$.
Moreover, this decomposition is unique in the sense that if $A\cong\gamma_{m}\circ(A_{1},\cdots,A_{m})$
then $m=n$ and, up to reordering, $A_{a_{i}}\cong A_{i}$ for all
$1\le i\le n$.\end{lem}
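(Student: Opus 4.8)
The plan is to establish the isomorphism by constructing an explicit monotone bijection and then verify uniqueness by a counting-and-matching argument. First I would analyze the right-hand side $A_{root}\circ(A_{a_1},\cdots,A_{a_n})$ as a set. Since $A_{root}=\gamma_n$ has underlying set $\{r,a_1,\cdots,a_n\}$ and each $A_{a_i}=\{a'\in A\mid a'\le_d a_i\}$ has root $a_i$, the iterated grafting identifies, at each step, the root $a_i$ of $A_{a_i}$ with the corresponding leaf $a_i$ of $\gamma_n$. Using the explicit description of grafting given after Corollary \ref{cor:pushout in omega}, the underlying set of the grafting is $\{r\}\cup\bigcup_{i=1}^n A_{a_i}$, where the $a_i$ appearing as roots of the $A_{a_i}$ are the same elements as the leaves of $\gamma_n$. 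I claim this set is exactly $A$: every edge $a\in A$ other than $r$ satisfies $a\le_d r$, and since $r^{\uparrow}=\{a_1,\cdots,a_n\}$, property (1) of Proposition \ref{prop:elementary properties} gives a unique child $a_i$ with $a\le_d a_i$, so $a\in A_{a_i}$; conversely the sets $A_{a_i}$ are pairwise disjoint (an edge descendance-below two distinct children $a_i,a_j$ would, by (1) applied to $r$, force $a_i=a_j$). Thus the identity map on underlying sets is the natural candidate for the isomorphism.

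Next I would check that this identity bijection is an isomorphism of broad posets, i.e. that the broad relation generated on the grafting coincides with the original relation $\le$ on $A$. The inclusions $A_{a_i}\hookrightarrow A$ and $\gamma_n\hookrightarrow A$ are monotone by construction of the induced relations, so every relation in the grafting holds in $A$; this gives one containment. For the reverse containment I must show every inequality $b_1\cdots b_k\le a$ of $A$ is generated by the relations coming from $A_{root}$ and the $A_{a_i}$ via the grafting description. The key case is an inequality whose factors lie in several different subtrees $A_{a_i}$ together with the top relation at $r$: here I would use transitivity of the broad poset (the axiom in Definition of broad poset) to decompose such an inequality, grouping the factors $b_j$ according to which $A_{a_i}$ they inhabit, applying the relation in each $A_{a_i}$ down to its root $a_i$, and then composing with the corolla relation $a_1\cdots a_n\le r$ in $\gamma_n$. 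This is precisely the situation the explicit grafting formula $z=a_1 b a_2$, $b\le y$, $a_1 y a_2\le x$ is designed to cover, iterated $n$ times.

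For the uniqueness statement, suppose $A\cong\gamma_m\circ(A_1,\cdots,A_m)$. I would argue that any such decomposition is forced to recover the root corolla and the branch subtrees intrinsically. The grafting $\gamma_m\circ(A_1,\cdots,A_m)$ has a root, which must correspond to $r_A$ under the isomorphism, and the unique-parent/children structure (Proposition \ref{prop:elementary properties}(3) and the children axiom of a dendroidally ordered set) pins down $r^{\uparrow}$: the images of the $m$ leaves of $\gamma_m$ are exactly the children of $r$ in $A$, so $m=|r^{\uparrow}|=n$. Having matched the children up to reordering, say the $i$-th leaf of $\gamma_m$ maps to $a_{\sigma(i)}$, the component $A_i$ is grafted at that leaf and therefore its image is exactly the descendance-downward-closed set of edges below $a_{\sigma(i)}$, which is $A_{a_{\sigma(i)}}$; restricting the isomorphism then yields $A_i\cong A_{a_{\sigma(i)}}$.

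The main obstacle I anticipate is the reverse containment in the second paragraph: verifying that the generated broad relation on the grafting is not strictly smaller than $\le$ on $A$. The subtlety is that a single inequality in $A$ may mix factors from many branches with relations holding deep inside each branch, and one must show transitivity alone, applied to the generating relations of the $A_{a_i}$ and of $\gamma_n$, reconstructs it without producing spurious extra relations (which is where simplicity and the absence of the unwanted two-edge linear subtrees, noted in the introduction, become essential). Once the transitivity bookkeeping is organized correctly — decomposing each factor down to the appropriate child $a_i$ and then invoking the corolla relation — the remaining verifications are routine.
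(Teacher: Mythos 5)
Your proposal is correct, but it proves the lemma by a different (dual) route from the paper. The paper verifies the universal property of the pushout directly: it takes an arbitrary dendroidally ordered set $B$ with compatible monotone maps $A_{root}\to B$ and $A_{a_i}\to B$, and shows these glue to a unique monotone map $A\to B$; the combinatorial core is that every $a\in A$ outside $A_{root}$ lies in exactly one $A_{a_i}$ and that $A_{root}\cap A_{a_i}=\{a_i\}$ (the latter via simplicity and the fact that $a\notin a^{\uparrow}$). You instead compute the colimit explicitly, using the concrete description of grafting given after Corollary \ref{cor:pushout in omega}, and show the identity on underlying sets is an isomorphism of broad posets between $A$ and the generated relation on the glued union. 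The two arguments hinge on the same facts --- the partition of $A\setminus\{r\}$ into the branches (Proposition \ref{prop:elementary properties}(1)) and the factorization of any inequality $b<r$ through the maximum $r^{\uparrow}=a_1\cdots a_n$ of $\hat{r}$, which is what lets you push each group of factors down to its child $a_i$ and then apply the corolla relation. What your version buys is that it makes explicit the step the paper compresses into ``easily seen to be monotone'': the reverse containment of broad relations, which you rightly flag as the only delicate point. What the paper's version buys is that it never has to reason about the generated relation on the union at all, only about maps out of the pieces. One small point worth making explicit when you write this up: the claim that the grouped factors $c_i$ in branch $i$ satisfy $c_i\le a_i$ is exactly the maximality of $r^{\uparrow}$ in $\hat{r}$ guaranteed by the ``has children'' axiom; cite it rather than leaving it implicit. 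Your uniqueness argument (the root and its children are intrinsic, so $m=n$ and the branches are the descendance-downward closures of the children) is the intended one; the paper leaves it as ``follows easily.''
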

\begin{proof}
We show that $A$ satisfies the universal property for the pushout
$A_{root}\circ(A_{a_{1}},\cdots,A_{a_{n}})$, of which the required
injections are evident. Suppose that $B$ is any dendroidally ordered
set with monotone function $A_{root}\to B$ and $A_{a_{i}}\to B$
making the relevant diagram commute. We need to construct an appropriate
monotone function $A\to B$. By Proposition \ref{prop:elementary properties},
for every $a\in A$ holds that if $a\notin A_{root}$ then $a\in A_{a_{i}}$
for precisely one $1\le i\le n$. Moreover, the following argument
shows that $A_{a_{i}}\cap A_{root}=\{a_{i}\}$. If $r\in A_{a_{i}}$
then it follows that $r=a_{i}$, but then $r\in r^{\uparrow}$, a
contradiction (in general $a\notin a^{\uparrow}$ holds for every
$a\in A$). If $a_{j}\in A_{a_{i}}$ and $a_{j}\ne a_{i}$ then $a_{j}\le_{d}a_{i}$
which means that there is a $b\in A^{*}$ with $a_{j}\in b$ and $b\le a_{i}$.
But then transitivity and the inequality $r\le(a_{1},\cdots,a_{n})$
will contradict the simplicity of $A$. Thus the only element of $A$
which can be in $A_{root}\cap A_{a_{i}}$ is $a_{i}$ which is clearly
there. These observations show that there is a unique function $A\to B$,
easily seen to be monotone, which is compatible with the given monotone
functions to $B$, completing the proof of the decomposition. The
uniqueness clause follows easily. \end{proof}
\begin{rem}
Combining Corollary \ref{cor:pushout in omega} and Lemma \ref{lem:fund decom}
it is seen that $\Omega$ can also be defined as the smallest full
subcategory of $\mathbf{bPos}$ containing all corollas and closed
under grafting. 
\end{rem}
Clearly this remark already implies that dendroidally ordered sets
and trees are, in a sense, the same. To furnish an exact statement
we describe constructions to turn a tree into a dendroidally ordered
set and vice versa. 

Let $T$ be a tree under any formalism that allows for a precise statement
of the fundamental decomposition exhibiting a tree $T$, essentially
uniquely, as the grafting $T=T_{root}\circ(T_{e_{1}},\cdots,T_{e_{n}})$,
as in Section \ref{sec:Trees-and-operads}. We define a dendroidally
ordered set, $[T]$, whose underlying set is $E(T)$, the set of edges
of $T$, by induction on the number $k$ of vertices in the tree $T$.
If $T=\eta$ (the tree with one edge and no leaves) then we set $[\eta]=\star$
while if $T$ is an $n$-corolla $C_{n}$ then we set $[C_{n}]=\gamma_{n}$,
covering the cases $k=0,1$. Suppose now that $T$ has more then $1$
vertex and write $T=T_{root}\circ(T_{e_{1}},\cdots,T_{e_{n}})$. We
then define $[T]=[T_{root}]\circ([T_{e_{1}}],\cdots,[T_{e_{n}}])$,
where the grafting is that of dendroidally ordered sets.

For the construction associating with any dendroidally ordered set
$A$ a tree $T$ we need the following concepts. A pair $(b,a)$ is
called a \emph{link} in a broad poset $A$ if $b<a$ and if for every
$b'\in A^{*}$ the inequalities $b\le b'<a$ imply that $b=b'$. The
number of links in a broad poset $A$ is the \emph{degree} of $A$
and is denoted by $d(A)$. When $A$ is a dendroidally ordered set
a link is called a \emph{vertex. }It can easily be shown that for
dendroidally ordered sets $A$ and $B$ the equality $d(A\circ B)=d(A)+d(B)$
holds. 

To obtain a tree $Tr(A)$ from a dendroidally ordered set $A$ we
proceed by induction on $n=d(A)$. If $n=0$ then $Tr(A)=\star$ while
if $n=1$ then $Tr(A)=\gamma_{n}$ where $n+1=|A|$, the cardinality
of the set $A$. Assume $Tr(A)$ was constructed for all $A$ with
$d(A)<n$ and let $A$ be a dendroidally ordered set with $d(A)=n$.
Then write $A=A_{root}\circ(A_{a_{1}},\cdots,A_{a_{n}})$, and let
$Tr(A)=Tr(A_{root})\circ(Tr(A_{a_{1}}),\cdots,Tr(A_{a_{n}}))$, obtained
by grafting of trees. 

Another straightforward inductive proof yields the following convenient
degree formula, where $L(A)$ denotes the set of leaves of $A$. 
\begin{lem}
\label{lem:deg formula}For every dendroidally ordered set $A$ the
equality $d(A)=|A|-|L(A)|$ holds. 
\end{lem}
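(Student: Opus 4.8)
The plan is to argue by induction on the degree $d(A)$, following the inductive structure of the operation $Tr$ and the additivity $d(A\circ B)=d(A)+d(B)$ recorded above. For the base case $d(A)=0$ I would first check that the only such dendroidally ordered set is $\star$: if $A$ had two or more edges then, by the root and children axioms together with finiteness, some edge would fail to be a leaf and would therefore carry a child, producing a strict relation and hence, after refining to an immediate step, a link. Thus $d(A)=0$ forces $|A|=1$ with the unique edge both root and leaf, and $|A|-|L(A)|=1-1=0=d(A)$.

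For the inductive step I would take $d(A)=n\ge1$ and invoke the fundamental decomposition $A\cong A_{root}\circ(A_{a_1},\dots,A_{a_k})$ of Lemma \ref{lem:fund decom}, where $A_{root}\cong\gamma_k$ and $k=|r^{\uparrow}|$. Since $d(\gamma_k)=1$, additivity gives $d(A)=1+\sum_{i=1}^{k}d(A_{a_i})$, so each $d(A_{a_i})<d(A)$ and the induction hypothesis applies to every summand. Everything then reduces to the two counting identities $|A|=1+\sum_i|A_{a_i}|$ and $|L(A)|=\sum_i|L(A_{a_i})|$: granting these, $|A|-|L(A)|=1+\sum_i\bigl(|A_{a_i}|-|L(A_{a_i})|\bigr)=1+\sum_i d(A_{a_i})=d(A)$. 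The degenerate case $k=0$, where $r$ is a stump and $A\cong\gamma_0$, is subsumed by reading all sums as empty.

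The edge identity follows from Proposition \ref{prop:elementary properties}: part (1) applied at $r$ shows every non-root edge lies in exactly one $A_{a_i}$, and $r$ lies in none (distinct root children being incomparable, as in the proof of part (3)), so the $A_{a_i}$ partition $A\setminus\{r\}$. For the leaf identity I would note that $r$ is not a leaf, since it has children or is a stump and hence $\hat r\neq\emptyset$, so the leaves of $A$ are distributed among the $A_{a_i}$. The one point that needs genuine care, and the step I expect to be the crux, is that for $a\in A_{a_i}$ the set $\hat a$ is the same whether computed in $A$ or in $A_{a_i}$: because $A_{a_i}$ is downward closed under $\le_d$ and carries the induced relation, any factor of a witness $b<a$ is a descendant of $a$, hence of $a_i$, hence already in $A_{a_i}$, so no relations are lost on passing to the subset. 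Consequently $a$ is a leaf of $A$ exactly when it is a leaf of $A_{a_i}$, which yields the leaf identity and closes the induction.
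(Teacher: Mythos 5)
Your proof is correct and follows essentially the route the paper indicates (it only says ``another straightforward inductive proof''): induction on degree via the fundamental decomposition $A\cong A_{root}\circ(A_{a_1},\dots,A_{a_k})$, the additivity $d(A\circ B)=d(A)+d(B)$, and the key observation that $\hat{a}$ is unchanged when computed in the $\le_d$-downward-closed subset $A_{a_i}$. The only cosmetic quibble is your parenthetical reason for $r\notin A_{a_i}$ (the real reason, given in the proof of Lemma \ref{lem:fund decom}, is that $r\le_d a_i$ would force $r=a_i\in r^{\uparrow}$); everything else, including the base case and the stump case $k=0$, is sound.
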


\subsection{The equivalence between the operadic approach and the dendroidal
order approach}

The constructions $T\mapsto[T]$ and $A\mapsto Tr(A)$ set up a correspondence
between the trees depicted somewhat loosely in Section \ref{sec:Trees-and-operads}
and dendroidally ordered sets. We now have the categories $\Omega_{c}$
and $\Omega_{\pi}$ given above and the categories $\Omega^{O}$ and
$\Omega_{\pi}^{O}$ given in \cite{MR2797154} in terms of operads
(denoted there by $\Omega$ and $\Omega_{p}$).
\begin{thm}
There is an equivalence of categories $\Omega^{O}\cong\Omega_{c}$
and $\Omega_{\pi}^{O}\cong\Omega_{\pi}$.\end{thm}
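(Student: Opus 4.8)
The plan is to establish the equivalence $\Omega^{O}\cong\Omega_{c}$ (the planar case being formally identical throughout) by exhibiting the constructions $T\mapsto[T]$ and $A\mapsto Tr(A)$ as functors that are mutually quasi-inverse. First I would verify that these operations are inverse to each other on objects, up to canonical isomorphism. Both constructions were defined by induction on structural complexity -- the number of vertices for $[T]$ and the degree $d(A)$ for $Tr(A)$ -- and both recursions are built on the \emph{same} fundamental decomposition: the operadic decomposition $T=T_{root}\circ(T_{e_{1}},\cdots,T_{e_{n}})$ from Section \ref{sec:Trees-and-operads} and the dendroidal-order decomposition $A\cong A_{root}\circ(A_{a_{1}},\cdots,A_{a_{n}})$ from Lemma \ref{lem:fund decom}. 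I would show $Tr([T])\cong T$ and $[Tr(A)]\cong A$ by a straightforward double induction, matching base cases ($\eta\leftrightarrow\star$ and $C_{n}\leftrightarrow\gamma_{n}$) and then using that grafting is sent to grafting by each construction, together with the degree formula $d(A)=|A|-|L(A)|$ of Lemma \ref{lem:deg formula} to keep the inductions synchronized. The uniqueness clause of Lemma \ref{lem:fund decom} is what guarantees these isomorphisms are canonical rather than merely existential.

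The substantive work is promoting these object-level bijections to a functorial equivalence, and this is where the main obstacle lies. On the $\Omega_{c}$ side an arrow is, by Definition \ref{Omega - algebraic definition}, simply a monotone function of broad posets; on the $\Omega^{O}$ side an arrow is a map of (symmetric coloured) operads between the free operads generated by the trees, as defined in \cite{MR2797154}. The heart of the proof is therefore to show that for dendroidally ordered sets $A,B$ there is a natural bijection between monotone functions $A\to B$ and operad maps $\mathscr{P}(Tr(A))\to\mathscr{P}(Tr(B))$. I would exploit the functor $\mathbf{bPos_{c}}\to\mathbf{Ope}$ constructed before Theorem \ref{thm:Main diagram}, which sends a broad poset to the operad with exactly one operation in each $\mathscr{P}(b_{1},\cdots,b_{n};b)$ whenever $b_{1}\cdot\cdots\cdot b_{n}\le b$. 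Because this operad has at most one operation of each input/output type, an operad map out of it is determined entirely by its action on objects (edges), and the requirement that it preserve operations is exactly the condition that the underlying edge-map be monotone. This observation should collapse the apparent gap between the two notions of morphism, since the ``freeness'' redundancy flagged in the remark following Theorem \ref{thm:Main diagram} is precisely what $k^{*}$-style reasoning discards.

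The most delicate point I anticipate is reconciling the two descriptions of arrows at the level of the \emph{generating} operad maps used in \cite{MR2797154}, where morphisms in $\Omega^{O}$ are often presented via generators and relations (faces, degeneracies, and isomorphisms) rather than as arbitrary operad maps. I would need to check that every operad map between free operads on trees arises from, and is faithfully encoded by, a monotone function of the associated broad posets, and conversely that each such monotone function induces a well-defined operad map -- in particular that degeneracies (which collapse edges) and face maps (inner-edge contractions and external-cluster deletions) correspond to monotone functions under the correspondence. Here the elementary properties collected in Proposition \ref{prop:elementary properties} -- existence of parents, unique children, binary joins -- provide the structural bookkeeping needed to translate operadic generators into order-theoretic data. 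Once the bijection on hom-sets is established and shown to respect composition and identities, naturality is routine, and combining it with the object-level equivalence of the previous paragraph yields the asserted equivalence of categories; the planar statement $\Omega_{\pi}^{O}\cong\Omega_{\pi}$ follows by running the identical argument with $A^{\cdot}$ in place of $A^{+}$ throughout.
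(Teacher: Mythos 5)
Your proposal is correct and follows the same route the paper intends: the paper's own proof is a single sentence asserting that $T\mapsto[T]$ and $A\mapsto Tr(A)$ ``are easily seen to extend to functors establishing the desired equivalences,'' and your write-up simply supplies the details it omits -- the synchronized inductions on the two fundamental decompositions for the object-level inverses, and the observation that the operads involved have at most one operation of each type, so that operad maps between them are exactly monotone functions of the underlying broad posets. No genuinely different idea is introduced, and no step of yours conflicts with the paper.
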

\begin{proof}
The constructions $A\mapsto Tr(A)$ and $T\mapsto[T]$ are easily
seen to extend to functors establishing the desired equivalences.
\end{proof}
Evidently, this equivalence establishes a translation mechanism from
tree concepts to the language of dendroidally ordered sets. This is
the tree formalism we propose. From this point onwards the term 'tree'
is synonymous with 'dendroidally ordered set', and thus, conforming
with our convention, comes in two flavours: commutative and non-commutative.
Thus, 'tree' is meant to be replaced throughout by either 'commutative
tree' or 'non-commutative tree'.

\section{\label{sec:Fundamental-structure-of}face-degeneracy factorizaion}

We give a characterizes of the maximal subtrees of a given trees by
means of pruning and contraction operations and prove a fundamental
decomposition result for arrows in the dendroidal category.

\subsection{Maximal subtrees}

For the rest of this subsection fix a tree $A$ of degree $n$ and
$B\subseteq A$ a subtree (i.e., $B$ with the induced broad poset
structure is dendroidally ordered) of degree $n-1$ (such subtrees
are called \emph{maximal}). We also work under the extra assumption
that $B$ contains the root of $A$ (necessarily as the root of $B$
too). If that is not the case then the proofs below can be adapted
to yield the same bottom line, but we omit the details. 

First we notice that $L(A)\cap L(B)=L(A)\cap B$ always holds. Denote
by $k_{1}$ the number of leaves of $A$ that $B$ misses, by $k_{2}$
the number of non-leaves of $A$ that $B$ misses, by $t_{1}$ the
number of leaves in $B$ that are also leaves in $A$, and by $t_{2}$
the number of leaves in $B$ that are not leaves in $A$. By the degree
formula in Lemma \ref{lem:deg formula} we may write
\[
d(A)=|A|-|L(A)\cap B|-|L(A)-B|
\]
and 
\[
d(B)=|B|-|L(B)\cap L(A)|-|L(B)-L(A)|.
\]
Subtraction yields $1=k_{2}+t_{2}$ and we analyze all possibilities.
If $k_{2}=0$ then $B$ only misses leaves of $A$, and there is precisely
one leaf in $B$ which is not a leaf in $A$. If, as sets, $A=B$
then $B$ does not miss any edges of $A$ and the only way to then
create a new leaf is by omitting a vertex of the form $\epsilon\le x$
for a unique $x$. Otherwise, $B\subset A$ and $B$ misses at least
one leaf $l\in L(A)$. We have that $l\in e^{\uparrow}$ for a unique
edge $e$, which is not a leaf in $A$, and thus $e\in B$. We claim
that $B$ misses every child of $e$. Indeed, assume that $e_{1},\cdots,e_{k}$,
with $k>0$, are the children of $e$ not missed by $B$. In $B$
these edges are incomparable and are descendants of $e$. Thus, by
Proposition \ref{prop:elementary properties}, there is an element
$u\in B^{*}$ such that $u\le e$ and $e_{i}\in u$ for all $1\le i\le k$.
But then $u\le e$ holds in $A$ and thus $u\le e^{\uparrow}$ which
contradicts $l\in e^{\uparrow}$ being a leaf. Since $B$ only misses
leaves of $A$ we conclude that every child of $e$ is a leaf and
so $B$ misses the vertex $e\le e^{\uparrow}$ which makes $e$ into
a new leaf, the only possible new one. Thus, the case $k_{2}=0$ implies
that $B$ is obtained either by turning a stump into a leaf or by
pruning an outer cluster. If $k_{2}=1$ then $B$ misses exactly one
non-leaf $e$ and no new leaves are present in $B$. But then $k_{1}=0$
since omitting a leaf of $A$ must create a new leaf. Thus, $B$ is
obtained from $A$ by omitting a single inner edge $e$. 

We summarize these results as follows. Given a tree $A$ of degree
$n$ there are three ways to produce a maximal subtree $B$. One is
by omitting an inner edge $e\in A$, denoted by $B=A/e$. Another
is by taking $B=A$ and omitting a stump $\epsilon\le x$, and the
third one is by pruning an outer cluster $C$, denoted by $A/C$.
The second of the three will also be considered a removal of an outer
cluster. Above we established the following result. 
\begin{thm}
Let $A$ be a tree of degree $n$. If $B\subseteq A$ is a maximal
subtree then $B=A/a$ for a unique inner edge $a\in A$ or $B=A/C$
for a unique outer cluster $C$ (the meaning of 'or' should be taken
in the exclusive sense). 
\end{thm}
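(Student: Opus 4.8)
The plan is to reduce the whole classification to one counting identity coming from the degree formula of Lemma~\ref{lem:deg formula}, and then to run a two-case analysis; the substance has really been assembled in the paragraphs above, so the task is to organize it. First I would record the set identity $L(A)\cap L(B)=L(A)\cap B$: an edge that is a leaf of $A$ has no descendants in $A$, hence none in the subset $B$, so it is a leaf of $B$ as soon as it lies in $B$. With the four counts $k_1,k_2,t_1,t_2$ as defined above and the bookkeeping relations $|A|-|B|=k_1+k_2$ and $|L(A)|=t_1+k_1$, applying Lemma~\ref{lem:deg formula} to both $A$ and $B$ and subtracting gives, after the cancellation of $k_1$ and $t_1$,
\[
1=d(A)-d(B)=k_2+t_2.
\]
Thus $(k_2,t_2)$ is either $(1,0)$ or $(0,1)$, and since these are incompatible the exclusive ``or'' of the statement is immediate. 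It then remains to match each alternative with the claimed operation and to verify uniqueness.

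In the case $(k_2,t_2)=(1,0)$ the subtree $B$ omits exactly one non-leaf $e$ and creates no new leaf. Here I would give a short structural argument that $k_1=0$: removing a leaf in a way that keeps the induced relation on $B$ a valid dendroidally ordered set amounts to pruning a whole cluster, which turns that cluster's apex into a new leaf and so would force $t_2\ge 1$. Since $t_2=0$, no leaf of $A$ may be dropped, so $e$ is the unique omitted edge; being a non-leaf other than the root it has both a parent and surviving children, hence is an inner edge, and contracting it yields $B=A/e$. As $e$ is recovered as the single edge of $A$ absent from $B$, it is unique.

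In the case $(k_2,t_2)=(0,1)$ the subtree $B$ omits only leaves of $A$ and acquires exactly one new leaf. If $A=B$ as sets, the sole way a leaf can appear is by deleting a stump vertex $\epsilon\le x$ for a unique $x$. Otherwise $B$ misses a leaf $l\in L(A)$ whose parent $e$ (a non-leaf) still lies in $B$, and the crux is to show that $B$ then misses \emph{every} child of $e$, so that the entire outer cluster at $e$ is pruned and $B=A/C$ for the cluster $C$ so determined.

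The main obstacle is exactly this last claim, and this is where I would invoke Proposition~\ref{prop:elementary properties}(2). Arguing by contradiction, suppose $e_1,\dots,e_k$ with $k>0$ are children of $e$ surviving in $B$. These are pairwise descendance incomparable descendants of $e$ inside $B$, so the proposition produces a single $u\in B^{*}$ with $e_i\in u$ for all $i$ and $u\le e$. This inequality persists in $A$, whence $u\le e^{\uparrow}$, contradicting the fact that the missed child $l\in e^{\uparrow}$ is a genuine leaf. Hence every child of $e$ is omitted, the cluster is complete, and $B=A/C$. Uniqueness in both branches follows since the omitted inner edge, respectively the omitted cluster, is reconstructed directly from the set-theoretic difference of $A$ and $B$.
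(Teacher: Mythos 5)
Your proposal is correct and follows essentially the same route as the paper: the same counting identity $1=k_2+t_2$ derived from the degree formula of Lemma~\ref{lem:deg formula}, the same two-case split, and the same appeal to Proposition~\ref{prop:elementary properties}(2) to show that a surviving child of $e$ would contradict $l\in e^{\uparrow}$ being a leaf. The only additions are minor organizational ones (making the bookkeeping relations and the uniqueness-via-$A\setminus B$ observation explicit), which do not change the argument.
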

An inclusion $A/a\to A$ for an inner edge $a$ is called an \emph{inner
face map}. An inclusion $A/C\to A$ for an outer cluster $C$ is called
an \emph{outer face map. }
\begin{example}
\label{Exampl:A_a as outer faces}If $A$ is a dendroidally ordered
set with root $r$ and $b\in r^{\uparrow}$ then the inclusion $A_{b}\to A$
is a composition of outer face maps. To see that, notice that if $r^{\uparrow}=b$
then the vertex $r^{\uparrow}\le r$ is an outer cluster and removing
it gives $A_{b}$. Otherwise, there must be an outer cluster in $A$
which is disjoint from $A_{b}$. Removing such outer clusters one
at a time will eventually allow removing the root vertex and obtain
$A_{b}$. 
\end{example}
One more type of monotone function is the following one. Let $l=(a_{1},a_{2})$
be a unary vertex in $A$. The monotone function $\sigma_{l}:A\rightarrow A/a_{2}$
defined by 
\[
\sigma_{l}(x)=\left\{ \begin{array}{cc}
x & x\ne a_{2}\\
a_{1} & x=a_{2}
\end{array}\right.
\]
is called the \emph{degeneracy map} associated with the unary vertex
$l$. 

Considering isomorphisms of trees we note that if $f:A\rightarrow B$
is an isomorphism then $f(r_{A})=r_{B}$ and for every edge $a\in A$
the equality 
\[
f(a^{\uparrow})=f(a)^{\uparrow}
\]
holds. Obviously, the only isomorphisms in $\Omega_{\pi}$ are identities.

\subsection{Fundamental decomposition of arrows in $\Omega$}

We now prove that every arrow in $\Omega$ decomposes as a composition
of degeneracies, an isomorphism, and face maps. This result first
appeared \cite{MR2366165} without proof and more recently, with proof,
as Lemma 2.3.2 in \cite{MR2797154}. We also mention Lemma 1.3.17
in \cite{MR2764874} that establishes essentially the same result
using the polynomial functors formalism of trees. The following technical
result is easily established. 
\begin{prop}
If the map $\alpha:B\rightarrow B'$ of trees is an inner face (respectively
outer face, degeneracy, isomorphism) then for any tree $A$, the map
$A\circ\alpha:A\circ B\rightarrow A\circ B'$ is an inner face (respectively
outer face, degeneracy, isomorphism) whenever the grafting is defined.\end{prop}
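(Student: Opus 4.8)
The plan is to exploit the explicit description of the grafting $A \circ B$ given just after the definition of grafting in Section~\ref{sec:The-correspondence-between}. Recall that, writing $A \cap B = \{y\}$ with $y$ the chosen leaf of $A$ and root of $B$, the broad poset $A \circ B$ has as its underlying set $A \cup B$, and an inequality $z \le x$ holds iff it holds already in $A$, or already in $B$, or there exist $a_1, a_2 \in A^*$ and $b \in B^*$ with $z = a_1 b a_2$, $b \le y$, and $a_1 y a_2 \le x$. The map $A \circ \alpha$ acts as the identity on the copy of $A$ and as $\alpha$ on the copy of $B$; I would first check that it is well defined on $A \circ B'$ (the leaf $y$ of $A$ is sent by $\alpha$ to the root $y'$ of $B'$, so the two copies still glue correctly along the shared edge), which is immediate from the pushout description and need not be belabored.

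The main work is a case analysis on the type of $\alpha$, and in each case I would translate the combinatorial characterization of that type, established in the previous subsection, into the explicit grafting formula. For an \emph{inner face} $\alpha = (B \to B/e)$, the edge $e$ is inner in $B$ and therefore remains inner in $A \circ B$ (grafting attaches $A$ only at the root $y$ of $B$, which is a leaf of $A$, so no edge of $B$ except possibly $y$ changes its inner/outer status, and $y$ being the root of $B$ is never the inner edge omitted by an inner face). Hence $A \circ \alpha$ omits the single inner edge $e$ of $A \circ B$, which is exactly the assertion that it is an inner face. For a \emph{degeneracy} $\sigma_l$ associated to a unary vertex $l = (a_1, a_2)$ in $B$, the same observation shows $l$ remains a unary vertex of $A \circ B$, and $A \circ \sigma_l$ is visibly the degeneracy map of $A \circ B$ associated to $l$, by comparing the piecewise definition of $\sigma_l$ with the identity action on the $A$-part. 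The \emph{isomorphism} case is the easiest: if $\alpha$ is an isomorphism then $A \circ \alpha$ is a bijection respecting the generating relations in both directions, hence an isomorphism of broad posets.

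The genuinely delicate case is the \emph{outer face} $\alpha = (B/C \to B)$, and this is where I expect the main obstacle to lie, because outer clusters sit at the \emph{outermost} layer of $B$ and grafting modifies precisely that layer near the root of $B$. The key point to verify is that the outer cluster $C$ of $B$ survives as an outer cluster of $A \circ B$ and that pruning it in $A \circ B$ produces $(A \circ B)/C \cong A \circ (B/C)$. When $C$ is an outer cluster that does not meet the root $y$ of $B$, this is clear, since grafting leaves that region of $B$ untouched. The subtle subcase is when the root vertex of $B$ is itself being removed — that is, when $C$ is the cluster containing the root edge $y$, or when $\alpha$ is the removal of a stump at $y$. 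Here I would argue that because $y$ is a leaf of $A$, the three edges meeting at the relevant vertex behave in $A \circ B$ exactly as they do in $B$ as far as the outer-cluster structure is concerned: the grafting formula only creates \emph{new} inequalities $a_1 b a_2 \le x$ passing through $y$, and one checks these do not obstruct $C$ from being trimmable by removing a single vertex, nor do they create spurious descendants. Once this is confirmed, invoking the characterization theorem for maximal subtrees identifies $A \circ \alpha$ as the outer face map pruning $C$. Throughout, Corollary~\ref{cor:pushout in omega} guarantees that all these graftings may be computed inside $\Omega$, so the resulting maps are genuinely maps of trees.
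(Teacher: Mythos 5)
The paper itself gives no argument for this proposition (it is dismissed as ``easily established''), so there is nothing to compare against line by line; on its own merits your proposal takes the right route --- the explicit description of $\le$ on $A\cup B$ --- and the inner face, degeneracy and isomorphism cases are fine. The one computation worth making explicit, since it disposes of almost everything at once, is this: the only inequalities of $A\circ B'$ not already present in $A$ or $B'$ have their right-hand side in $A$, and the only $z\le y$ with $z\ne y$ are those of $B'$ (because $y$ is a leaf of $A$, so $\hat{y}=\emptyset$ there). Hence $\hat{x}$ computed in $A\circ B'$ equals $\hat{x}$ computed in $B'$ for every $x\in B'$, so leaves, stumps, children, links and outer clusters of $B'$ are all unchanged by the grafting. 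Two small slips: an inner (or outer) face is an inclusion of the smaller tree into the larger one, so the contracted edge $e$ lives in the codomain $B'$, not in $B$ as your notation $\alpha=(B\to B/e)$ suggests.

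The genuine problem is the ``subtle subcase'' you single out as the main obstacle, namely the outer cluster $C$ containing the root edge $y$ of $B'$. Your proposed resolution --- that $C$ survives as an outer cluster of $A\circ B'$ --- is false: after grafting, $y$ acquires a vertex below it (in $A$) and a vertex above it (in $B'$), so it becomes an \emph{inner} edge of $A\circ B'$, and deleting it is an inner face, not an outer one. The correct observation is that this case simply does not arise: the root-cluster face $B'/C\to B'$ sends the root of $B'/C$ to a non-root edge of $B'$, so the square defining the induced map on pushouts does not commute and $A\circ\alpha$ is undefined. This is precisely what the clause ``whenever the grafting is defined'' excludes, and the outer faces that do survive (pruning a top cluster, or deleting a stump) sit entirely in the leaf layer of $B'$, where the displayed computation shows nothing changes. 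You should replace the attempted verification of this subcase by its exclusion; as written, that paragraph asserts something incorrect and then builds on it.
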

\begin{lem}
Any arrow $f:A\rightarrow B$ in $\Omega$ decomposes as 
\[
\xymatrix{A\ar[r]^{f}\ar[d]^{\delta} & B\\
A'\ar[r]^{\pi} & B'\ar_{\varphi}[u]
}
\]
where $\delta:A\rightarrow A'$ is a composition of degeneracy maps,
$\pi:A'\rightarrow B'$ is an isomorphism, and $\varphi:B'\rightarrow B$
is a composition of face maps. \end{lem}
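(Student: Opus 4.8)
The plan is to prove the factorization by induction on a suitable complexity measure of $f$, reducing the general case to the behavior of $f$ on the fundamental decomposition $A \cong A_{root}\circ(A_{a_1},\cdots,A_{a_n})$ and then invoking the proposition immediately above, which lets each elementary map pass through a grafting. First I would separate the two qualitative phenomena a monotone map can exhibit. A map $f\colon A\to B$ can fail to be injective, which forces some unary vertex to collapse, and it can fail to be surjective onto a subtree, which forces $B$ to contain faces not in the image. The degeneracies $\delta$ account for the first failure and the face maps $\varphi$ for the second, with the isomorphism $\pi$ absorbing any planar/symmetric relabeling in the middle. So the natural first step is to show that $f$ always factors through its image: let $B'\subseteq B$ be the sub-broad-poset generated by $f(A)$, and argue that $B'$ is itself dendroidally ordered, so that $f = \varphi\circ g$ where $\varphi\colon B'\hookrightarrow B$ is an inclusion and $g\colon A\to B'$ is surjective.

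Next I would handle the two resulting subproblems independently. For $\varphi$, I would show that any inclusion of a subtree $B'\hookrightarrow B$ is a composition of face maps; this is essentially an iterated application of the maximal-subtree theorem proved above, peeling off one inner edge or one outer cluster at a time, each removal strictly decreasing $d(B)-d(B')$ by the additivity $d(A\circ B)=d(A)+d(B)$, so the process terminates. Example~\ref{Exampl:A_a as outer faces} already provides exactly this kind of iterated-face argument for the subtrees $A_b$, and the general case follows the same pattern using the classification of maximal subtrees. For the surjective map $g\colon A\to B'$, I would argue that a surjection that is not injective must identify the two edges of some unary vertex $l=(a_1,a_2)$, and that factoring out the associated degeneracy $\sigma_l\colon A\to A/a_2$ yields a surjection $A/a_2\to B'$ of strictly smaller underlying cardinality; iterating, one reaches a surjection that is injective, hence an isomorphism $\pi$. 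This gives $g = \pi\circ\delta$ with $\delta$ a composition of degeneracies.

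The inductive scaffolding I would use throughout is the fundamental decomposition together with the preceding proposition: writing $A \cong \gamma_n\circ(A_{a_1},\cdots,A_{a_n})$, a monotone $f$ is determined by its restrictions to the $A_{a_i}$ and to the root corolla, and since each of inner face, outer face, degeneracy, and isomorphism is stable under $A\circ(-)$, one can build the global factorization from the factorizations on the pieces. The degree $d(A)=|A|-|L(A)|$ from Lemma~\ref{lem:deg formula} provides a clean induction parameter, as removing a vertex lowers the degree by one.

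The main obstacle I anticipate is not either factorization step in isolation but their \emph{interaction}: ensuring that the decomposition genuinely has the stated shape with all degeneracies strictly before the isomorphism and all faces strictly after, rather than interleaved. Concretely, one must verify that the image $B'$ of the surjective part is well-defined as a dendroidally ordered set and that the surjection onto it really decomposes as degeneracies followed by an isomorphism without any hidden face behavior sneaking in — equivalently, that a surjective monotone map collapses only unary vertices and never, say, merges a branch point in a way that is not a composite of $\sigma_l$'s. Establishing that surjective monotone maps of trees are exactly the composites of degeneracies and isomorphisms is where the real content lies, and I would prove it by a careful analysis of what $f$ does to the children relation $a^{\uparrow}$, using simplicity and the uniqueness of parents from Proposition~\ref{prop:elementary properties} to rule out any non-degenerate collapse.
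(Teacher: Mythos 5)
Your route is genuinely different from the paper's. The paper argues by a single induction on $d(A)+d(B)$, splitting into cases according to where $f$ sends the root and the root's children: if $f(r_A)\ne r_B$ it peels off outer faces via $B_b\to B$; if $f$ matches roots and root-children it decomposes $f$ as a grafting $f_{root}\circ(f_{a_1},\cdots,f_{a_k})$ and invokes the stability-under-grafting proposition; otherwise it extracts either a degeneracy (when a root-child maps to $r_B$) or inner faces (removing the inner edges strictly between $r_B$ and the images of the root-children). You instead propose the classical epi--mono strategy: factor through the image, show inclusions of subtrees are composites of face maps, and show surjections are composites of degeneracies followed by an isomorphism. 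That strategy is viable and arguably cleaner in that it isolates the two phenomena, but it front-loads exactly the statements the paper's interleaved induction lets one avoid proving in isolation.

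As written, though, the proposal has three substantive gaps, each of which carries real content. First, the claim that $f(A)$ (with the induced broad relation) is dendroidally ordered is asserted, not proved; simplicity and the root axiom transfer easily, but the axiom that every non-leaf has children requires showing that $f(a^{\uparrow})$ is the \emph{maximum} of the set of tuples in $f(A)^{*}$ lying strictly below $f(a)$ in $B$, and relations in $B$ among image elements need not be pulled back from $A$. Second, iterating the maximal-subtree theorem requires an intermediate-value statement: every proper subtree $B'\subsetneq B$ is contained in \emph{some} maximal subtree of $B$; the degree count from Lemma \ref{lem:deg formula} shows the process terminates if it can be run, but does not produce the intermediate subtree, and this existence claim is not supplied. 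Third, and most seriously, you yourself flag that the crux is showing a surjective monotone map collapses only unary vertices, but you do not prove it. The key mechanism is the finiteness of $\le$: if $f(a)=f(p)$ for $a$ a child of $p$ and $p$ has another child $a'$, then $f(p)\cdot f(a')\le f(p)$ forces infinitely many relations in $B$ by transitivity, a contradiction; one also needs simplicity to rule out identifying descendance-incomparable edges (via Proposition \ref{prop:elementary properties}(2)) and an argument excluding identifications of non-adjacent comparable edges. Until these three lemmas are actually established, the proposal is a correct plan rather than a proof.
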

\begin{proof}
The proof is by induction on $n=d(A)+d(B)$, noting that if at any
point $d(A)=0$ then the claim is trivial. The cases $n=0$ and $n=1$
are dealt with by inspection. Assume the assertion holds for $1\le n<m$
and assume $f:A\rightarrow B$ with $|A|+|B|=m$. First assume that
$f(r_{A})=b\ne r_{B}$. In that case $f$ factors through the inclusion
$B_{b}\to B$, which by Example \ref{Exampl:A_a as outer faces} is
a composition of outer face maps. The induction hypothesis now furnishes
the desired composition. 

We now consider the case where $f(r_{A})=r_{B}$ and $f(r_{A}^{\uparrow})=r_{B}^{\uparrow}$.
Let $r_{A}^{\uparrow}=a_{1}\cdot\cdots\cdot a_{k}$ and $r_{B}^{\uparrow}=b_{1}\cdot\cdots\cdot b_{k}$
with $f(a_{i})=b_{i}$. In that case, by restricting $f$ to $A_{a_{i}}$,
one obtains the map $f_{i}:A_{a_{i}}\rightarrow B_{b_{i}}$. Let $A_{root}=\{r_{A},a_{1},\cdots,a_{k}\}$
with the broad order induced by $A$ and define $B_{root}$ similarly.
Let $f_{root}:A_{root}\rightarrow B_{root}$ be the restriction of
$f$ to $A_{root}$. The map $f$ can be written as $f_{root}\circ(f_{a_{1}},\cdots,f_{a_{k}})$.
The induction hypothesis then manufactures a decomposition of each
$f_{i}$ which can then be grafted together to produce the desired
decomposition of $f$.

The third case is when $f(r_{A})=r_{B}$ but $f(r_{A}^{\uparrow})\ne r_{B}^{\uparrow}$.
Notice that if $f(a)=r_{B}$ for some $a\in r_{A}^{\uparrow}$ then
$r_{A}^{\uparrow}=a$ (otherwise $\le$ in $B$ will not be finite)
and thus $(r_{A},a)$ is a vertex. Let $\sigma:A\rightarrow A'$ be
the degeneracy associated with it. Since $f(r_{A})=f(a)=r_{B}$ it
follows that $f$ factors through $\sigma$ as $f=f'\circ\sigma$.
The induction hypothesis applied to $f'$ together with the degeneracy
$\sigma$ produces the required decomposition of $f$. We may thus
assume further that $f(a)\ne r_{B}$ for all $a\in r_{A}^{\uparrow}$
which, together with our assumption that $f(r_{A}^{\uparrow})\ne r_{B}^{\uparrow}$,
implies that the set $I=\{x\in B\mid r_{B}<_{d}x<_{d}f(a),a\in r_{A}^{\uparrow}\}$
is non-empty and consists entirely of inner edges. Let $\hat{B}$
be the dendroidally ordered subset of $B$ obtained by removing all
of those inner elements. The inclusion $\hat{\phi}:\hat{B}\rightarrow B$
is then obviously a composition of (inner) face maps, and the map
$f$ factors as $f=\hat{\phi}\circ\hat{f}$. The induction hypothesis
applied to $\hat{f}$ together with $\hat{\phi}$ gives the desired
decomposition of $f$ and completes the proof. 
\end{proof}
\bibliographystyle{plain}
\bibliography{references}

\begin{thebibliography}{10}

\bibitem{MR0420609}
J.M. Boardman and R.M. Vogt.
\newblock {\em Homotopy invariant algebraic structures on topological spaces}.
\newblock Lecture Notes in Mathematics, Vol. 347. Springer-Verlag, Berlin,
  1973.

\bibitem{Cisinski:2010fk}
D.-C. Cisinski and I.~Moerdijk.
\newblock Dendroidal segal spaces and infinity-operads.
\newblock 10 2010.

\bibitem{Cisinski:2011uq}
D.-C. Cisinski and I.~Moerdijk.
\newblock Dendroidal sets and simplicial operads.
\newblock 09 2011.

\bibitem{MR2805991}
D.-C. Cisinski and I.~Moerdijk.
\newblock Dendroidal sets as models for homotopy operads.
\newblock {\em J. Topol.}, 4(2):257--299, 2011.

\bibitem{MR1301191}
V.~Ginzburg and M.~Kapranov.
\newblock Koszul duality for operads.
\newblock {\em Duke Math. J.}, 76(1):203--272, 1994.

\bibitem{MR1758246}
C.~Hermida.
\newblock Representable multicategories.
\newblock {\em Adv. Math.}, 151(2):164--225, 2000.

\bibitem{MR2764874}
J.~Kock.
\newblock Polynomial functors and trees.
\newblock {\em Int. Math. Res. Not. IMRN}, (3):609--673, 2011.

\bibitem{MR2797154}
I.~Moerdijk and B.~To{\"e}n.
\newblock {\em Simplicial methods for operads and algebraic geometry}.
\newblock Advanced Courses in Mathematics. CRM Barcelona. Birkh\"auser/Springer
  Basel AG, Basel, 2010.
\newblock Edited by Carles Casacuberta and Joachim Kock.

\bibitem{MR2366165}
I.~Moerdijk and I.~Weiss.
\newblock Dendroidal sets.
\newblock {\em Algebr. Geom. Topol.}, 7:1441--1470, 2007.

\bibitem{MR2508925}
I.~Moerdijk and I.~Weiss.
\newblock On inner {K}an complexes in the category of dendroidal sets.
\newblock {\em Adv. Math.}, 221(2):343--389, 2009.

\bibitem{Weiss:2010fk}
I.~Weiss.
\newblock From operads to dendroidal sets.
\newblock 12 2010.

\end{thebibliography}

\end{document}